\providecommand{\U}[1]{\protect\rule{.1in}{.1in}}
\newtheorem{theorem}{Theorem}
\newtheorem{definition}[theorem]{Definition}
\newtheorem{proposition}[theorem]{Proposition}
\newtheorem{remark}[theorem]{Remark}
\newenvironment{proof}[1][Proof]{\noindent\textbf{#1.} }{\ \rule{0.5em}{0.5em}}
\begin{document}

\title{Traveling and standing waves in coupled pendula and Newton's cradle}
\author{Carlos Garc\'{\i}a-Azpeitia\thanks{{\small Departamento de Matem\'{a}ticas,
Facultad de Ciencias, Universidad Nacional Aut\'{o}noma de M\'{e}xico, 04510
M\'{e}xico DF, M\'{e}xico. cgazpe@ciencias.unam.mx}}}
\maketitle

\begin{abstract}
The existence of traveling and standing waves is investigated for chains of
coupled pendula with periodic boundary conditions. The results are proven by
applying topological methods to subspaces of symmetric solutions. The main
advantage of this approach comes from the fact that only properties of the
linearized forces are required. This allows to cover a wide range of models
such as Newton's cradle, the Fermi-Pasta-Ulam lattice and the Toda lattice.

Keywords: Newton's cradle, coupled pendula, periodic waves, global
bifurcation. MSC 34C25, 37G40, 47H11

\end{abstract}

A chain of coupled pendula is a device of hanging limbs from an elastic rod.
When this device is set in motion, each limb behaves like a pendulum that
interacts with its neighbors by torsion forces. When the forces are
approximated by Hooke's law, the equations describing the pendula are
equivalent to the discretized Sine-Gordon equations.

Another device of interest is the Newton's Cradle. This system consists of
beads suspended by inelastic strings. In the absence of contact between beads,
they have pendular motion; when beads collide with their neighbors, they repel
each other with a Hertz's type force.

The paper aim is to study a model that includes the mentioned pendula among
other situations of interest, such as the Klein-Gordon, Fermi-Pasta-Ulam and
Toda lattices.

The movement of $n$ coupled oscillators, $q_{j}(t)\in\mathbb{R}$ for
$j=1,...,n$, with periodic boundary conditions, $q_{j}=q_{j+n}$, is described
by equations%
\begin{equation}
-\ddot{q}_{j}=U^{\prime}(q_{j})+W^{\prime}(q_{j}-q_{j-1})-W^{\prime}%
(q_{j+1}-q_{j})\text{,} \label{Ec}%
\end{equation}
where potentials $U$ and $W$ represent the dynamic and nonlinear interaction
of the oscillators, respectively.

The dynamic of a pendulum is governed by
\begin{equation}
U(x)=\omega^{2}(1-\cos x)\text{,\qquad}\omega^{2}=cg/l\text{,} \label{U}%
\end{equation}
where $g$ is the acceleration due to gravity and $l$ is the length of the
pendulum. The constant $c$ represents the coupling strength after a rescaling,
i.e. the normalized Hooke's Law is given by%
\[
W(x)=\frac{1}{2}x^{2}\text{,}%
\]
and the Hertz's contact force by%
\begin{equation}
W(x)=\frac{2}{5}\left\vert x\right\vert ^{5/2}\text{ if }x\leq0\text{, }\quad
W(x)=0\text{ if }x>0\text{.} \label{W}%
\end{equation}

We assume that equations (\ref{Ec}) have homogenous equilibria of the form%
\[
q_{j}(t)=a\text{\qquad for }j=1,..,n\text{.}%
\]
These properties hold true in the coupled pendula and the Newton's cradle when
$a=0$ and $a=\pi$. Under these considerations, the nonlinear equations
(\ref{Ec}) have periodic orbits arising from the homogenous equilibria.

The study of periodic orbits in Hamiltonian systems goes back to Poincare.
Lyapunov proves the nonlinear continuation of periodic orbits from normal
modes of elliptic equilibria under non-resonant conditions. Later on, the
Lyapunov center theorem was extended to consider the multiplicity of periodic
orbits and the global properties of the families. Regarding multiplicity, the
Weinstein--Moser theorem (1973) proves multiple periodic solutions of
Hamiltonian systems with fixed energy, while the Fadell--Rabinowitz theorem
(1978) considers fixed period. In \cite{Al78}, Alexande-Yorke (1978) prove the
global property of Lyapunov families. The proofs of these theorems make use of
topological invariants that consider the $S^{1}$-symmetry induced by time
translations. These theorems have been generalized to consider spatio-temporal
symmetries, for instance, see \cite{Bar93,Mo88} for fixed energy,
\cite{Bar93,FaRa77} for fixed period and \cite{BaKr10,Fi88,IzVi03} for global
bifurcation. Other equivariant approaches can be found in
\cite{Ch03,GoSc86,Va82} and references therein.

In the context of equation (\ref{Ec}), in \cite{Gi86}, local bifurcation of
periodic solutions is proven in the case $U=0$ and $W^{\prime\prime}(0)\neq0$,
and in \cite{GoSt02}, a Weinstein-Moser theorem is proven in the case
$U^{\prime\prime}=1$ and $W^{\prime\prime}(0)\neq0$. The present paper proves
the bifurcation of periodic solutions using Brouwer degree in spaces of
spatio-temporal symmetric functions, including a $\mathbb{Z}_{2}$-symmetry
induced by the reflection in time. In contrast with the results in \cite{Gi86}
and \cite{GoSt02}, this procedure has the advantage that allows to prove the
global property.

\emph{Theorem \ref{Thm1}.} Assume $W^{\prime\prime}(0)\neq0$, which is the
case in the coupled pendula. For each $k\in\lbrack1,n/2)\cap\mathbb{N}$ such
that
\begin{equation}
\nu_{k}=\sqrt{U^{\prime\prime}(a)+(2\sin k\pi/n)^{2}W^{\prime\prime}(0)}>0
\label{v}%
\end{equation}
is non-resonant (Definition \ref{nonres}), the homogenous equilibrium has
three global branches of $2\pi/\nu$-periodic solutions; for $k\in\{n/2,n\}$,
only one branch exists. The frequencies $\nu$ along the branches converge to
$\nu_{k}$ as the solutions approach the equilibrium and the bifurcating branch
is a continuum that either goes to infinity in Sobolev norm or period, or ends
at other bifurcation point.

In the coupled pendula, at the equilibrium $a=0$, the non-resonant condition
of $\nu_{k}$ holds true except for a finite of parameters $\omega$ (Section
4.1). For the resonant parameters the theorem proves only the existence of the
branches with the higher frequency $\nu_{j}=l\nu_{k}$.

When $W^{\prime\prime}(0)=0$ all frequencies are resonant,%
\[
\nu_{k}:=\sqrt{U^{\prime\prime}(a)}\text{\qquad for }k\in\{1,...,n\}\text{.}%
\]
These resonances make impossible to obtain multiple periodic solutions by
means of topological degree; instead, the existence of multiple standing waves
is proven with the Fadell-Rabinowitz theorem for odd potentials given in
\cite{FaRa77}.

\emph{Theorem \ref{Thm2}.}\ Assume $W^{\prime\prime}(0)=0$ and $U^{\prime
\prime}(a)>0$, which is the case in Newton's cradle. The homogenous
equilibrium has at least $n/2-1$ bifurcations of $2\pi/\nu$-periodic solutions
with symmetries (\ref{SI}) and (\ref{SII}). The frequency $\nu$ of the
periodic solutions is arbitrarily close to $\nu_{0}$, but the bifurcation does
not necessarily form a local continuum.

The periodic solutions have the symmetries of traveling and standing waves.
This fact is proven exploiting the equivariance of equations (\ref{Ec}) under
the action of the group
\[
D_{n}\times O(2),
\]
where $D_{n}$ is composed by permutations of the oscillators and $O(2)$ shifts
and reflects time; see Definition \ref{Def}.

The symmetries presented in Section 3 are valid along the global branches,
while the estimates shown in Section 2.1 are valid locally. We reproduce here
the simple case $n$ odd and $k=1$. The symmetries for $k=1$ have been analyzed
previously in \cite{Gi86} and \cite{GoSc86}. The present paper completes the
classification of the symmetries for all $k$'s.

\emph{Symmetries and estimates. }Let
\[
\zeta=2\pi/n\text{,}\qquad q_{j}(t)=a+x_{j}(\nu t)\text{,}%
\]
where $x_{j}(t)$ is $2\pi$-periodic and $\nu$ is the frequency. The branch of
traveling waves has symmetries and local estimates,%
\begin{align}
x_{j}(t)  &  =x_{n-j}(-t)=x_{j+1}(t-\zeta)\text{,}\label{T1}\\
x_{j}(t)  &  =r\cos(t+j\zeta)+\mathcal{O}(r^{2})\text{,}\nonumber
\end{align}
where $r$ is a parameterization of the branch and $\mathcal{O}(r^{2})$ is a
$2\pi$-periodic function. The branch of standing waves has symmetries and
local estimates,%
\begin{align}
x_{j}(t)  &  =x_{n-j}(t)=x_{j}(-t)\text{,}\label{SI}\\
x_{j}(t)  &  =r\cos(j\zeta)\cos t+\mathcal{O}(r^{2})\text{,}\nonumber
\end{align}
and the other,%
\begin{align}
x_{j}(t+\pi)  &  =x_{n-j}(t)=x_{j}(-t)\text{,}\label{SII}\\
x_{j}(t)  &  =r\sin(j\zeta)\sin t+\mathcal{O}(r^{2})\text{.}\nonumber
\end{align}

Traveling waves for Newton's cradle have been estimated asymptotically in
\cite{Ja11}, and for beads in \cite{Ja12} and \cite{StKe12}. In these papers,
traveling waves are constructed by means of a reduction to a single equation
with delay. This procedure is commonly used in many problems; see \cite{Pa05}
and the references therein. However, the reduction to one equation cannot be
used to prove existence of standing waves, and then, one of the achievements
of the present paper is the construction of them for the Newton's cradle.
Neither the Weinstein--Moser theorem in \cite{GoSt02} is applicable to the
Newtons's cradle because it assumes non-resonant conditions over
$W^{\prime\prime}(0)$. Through Proposition \ref{WM}, a Weinstein--Moser
theorem can be proven for the Newtons's cradle.

In the case of beads, $W^{\prime\prime}(0)=0$ and $U=0$, the topological
approach used to establish the existence of standing waves cannot be used due
to the fact that $\nu_{k}=0$ for $k=1,...,n$. In \cite{Ja12} is shown that
standing waves exist even for homogenous potential $W$ with $\nu_{k}=0$.
Therefore, further work is necessary to investigate the existence of standing
waves in the beads problem.

In Section 1, we set the bifurcation problem and make a global reduction to a
finite number of Fourier components. In Section 2.1, we prove the global
bifurcation in the case $W^{\prime\prime}(0)\neq0$. In Section 2.2, we prove
existence of standing waves in the case $W^{\prime\prime}(0)=0$ and
$U^{\prime\prime}(a)>0$. In Section 3, we describe the symmetries. In Section
4, we apply the theorems to the coupled pendula and the FPU and Toda lattices.
In Section 4.3, we present a comment about the existence of standing waves in
the case of a homogenous potential $W$ with $\nu_{k}=0$.

\section{Setting up the problem}

Let $q=(q_{1},...,q_{n})$ and%
\begin{equation}
V(q)=\sum_{j=1}^{n}[U(q_{j})+W(q_{j}-q_{j-1})]\text{.} \label{pot}%
\end{equation}
Equations (\ref{Ec}) can be expressed in vectorial form as%
\[
-\ddot{q}=\nabla V(q).
\]

Hereafter, we assume that the potential $V$ is twice differentiable and
\[
\mathbf{a}=(a,....,a)
\]
is an equilibrium, $\nabla V(\mathbf{a})=0$. This is equivalent to assume that
$U^{\prime}(a)=0$ and $W^{\prime}(0)=0$.

Using the change of variables $q(t)=\mathbf{a}+x(\nu t)$, the system of
equations become
\[
-\nu^{2}\ddot{x}=\nabla V(\mathbf{a}+x)\text{.}%
\]
Let $H_{2\pi}^{2}(\mathbb{R}^{n})$ be the Sobolev space of $2\pi$-periodic
functions.\ We define the operator $f$ from $H_{2\pi}^{2}(\mathbb{R}^{n})$ in
$L_{2\pi}^{2}(\mathbb{R}^{n})$ as%
\begin{equation}
f(x;\nu)=-\nu^{2}\ddot{x}-\nabla V(\mathbf{a}+x)\text{.}%
\end{equation}
Since $\mathbf{a}$ is an equilibrium, then $f(0;\nu)=0$ for all $\nu$.
Therefore, the branches of $2\pi/\nu$-periodic solutions emanating from the
equilibrium $\mathbf{a}$ correspond to zeros of $f(x;\nu)$ bifurcating from
$(0,\nu_{0})$.

\begin{definition}
\label{Def} Let $D_{n}$ be the subgroup of permutations generated by
\[
\zeta(j)=j+1,\qquad\kappa(j)=n-j\qquad\text{modulus }n\text{.}%
\]
Let $\rho:D_{n}\times O(2)\rightarrow GL(L_{2\pi}^{2})$ be the homomorphism
generated by
\begin{equation}
\rho(\gamma)(x_{1},...,x_{n})=(x_{\gamma(1)},...,x_{\gamma(n)})\text{,}%
\end{equation}
for $\gamma\in D_{n}$ and for $\varphi,\bar{\kappa}\in O(2)$,
\begin{equation}
\rho(\varphi)x(t)=x(t+\varphi)\text{,}\quad\rho(\bar{\kappa}%
)x(t)=x(-t)\text{.}%
\end{equation}
Then $\rho$ defines a $D_{n}\times O(2)$-representation of $L_{2\pi}%
^{2}(\mathbb{R}^{n})$ and induces the left action in $L_{2\pi}^{2}%
(\mathbb{R}^{n})$ given by $\rho(\gamma,x)=\rho(\gamma)x$ .
\end{definition}

Since $V(x)$ is invariant by the action of $D_{n}$, then $\nabla V(x)$ is
$D_{n}$-equivariant. Thus, the operator $f(x)$ is $D_{n}$-equivariant. Given
that the equations are autonomous and reversible in time, then $f(x)$ is
$D_{n}\times O(2)$-equivariant.

\subsection{Lyapunov-Schmidt reduction}

In the case $W^{\prime\prime}(0)\neq0$, we will prove existence of periodic
solutions using a global Lyapunov-Schmidt reduction and Brouwer degree.
Although an application of Leray--Schauder degree can provide similar results
without reductions, we prefer this approach because the reduction is required
in the Newton's cradle anyway. The idea of the global Lyapunov-Schmidt
reduction is taken from \cite{IzVi03}.

The Fourier expansion of $x\in L_{2\pi}^{2}$ and the projection $P$ are
defined as
\[
x(t)=\sum_{l\in\mathbb{Z}}x_{l}e^{il\tau}\text{ and }Px=\sum_{\left\vert
l\right\vert \leq l_{0}}x_{l}e^{ilt}\text{.}%
\]
Let $\mathbf{x}_{1}$\ and $\mathbf{x}_{2}$\ be the components of $x$, given
by
\[
\mathbf{x}_{1}=Px,\qquad\mathbf{x}_{2}=(I-P)x\text{.}%
\]
The components of $f$ are given by $f_{1}=Pf$ and $f_{2}=(I-P)f$.

We realize the global Lyapunov-Schmidt reduction in the set $\Omega_{\rho
}\times\Lambda_{\varepsilon}$, where%
\[
\Omega_{\rho}=\{x\in H_{2\pi}^{2}:\left\Vert x\right\Vert _{H_{2\pi}^{2}}%
<\rho\},\qquad\Lambda_{\varepsilon}=\{\nu>\varepsilon\}\text{.}%
\]

\begin{proposition}
There is a $l_{0}$ such that $\mathbf{x}_{2}(\mathbf{x}_{1},\nu)$\ is the only
solution of $f_{2}(\mathbf{x}_{1}+\mathbf{x}_{2},\nu)=0$ in $\Omega_{\rho
}\times\Lambda_{\varepsilon}$. Thus $f(\mathbf{x}_{1}+\mathbf{x}_{2},\nu)=0$
if and only if $\phi(\mathbf{x}_{1},\nu)=0$, where%
\begin{align}
\phi(\mathbf{x}_{1},\nu)  &  =f_{1}(\mathbf{x}_{1}+\mathbf{x}_{2}%
(\mathbf{x}_{1},\nu);\nu)\\
&  =-\nu^{2}\partial_{tt}\mathbf{x}_{1}-P\nabla V(\mathbf{a}+\mathbf{x}%
_{1}+\mathbf{x}_{2}(\mathbf{x}_{1},\nu))\text{.}\nonumber
\end{align}
Furthermore, the reduced map $\phi(\mathbf{x}_{1},\nu)$ is $D_{n}\times
O(2)$-equivariant, where the action of $\varphi,\bar{\kappa}\in O(2)$ in the
$l$-th Fourier component is given by
\[
\rho(\varphi)x_{l}=e^{il\varphi}x_{l},\qquad\rho(\bar{\kappa})x_{l}=\bar
{x}_{l}.
\]

\end{proposition}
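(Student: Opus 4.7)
The plan is to solve $f_{2}(\mathbf{x}_{1}+\mathbf{x}_{2},\nu)=0$ uniformly on $\Omega_{\rho}\times\Lambda_{\varepsilon}$ by a contraction-mapping argument on the high-frequency complement $(I-P)H^{2}_{2\pi}$, exploiting the fact that on the $l$-th Fourier mode the operator $-\nu^{2}\partial_{tt}$ acts as multiplication by $\nu^{2}l^{2}$. Consequently, on $(I-P)L^{2}_{2\pi}$ the inverse $(-\nu^{2}\partial_{tt})^{-1}$ has norm into $H^{2}_{2\pi}$ bounded by $C/(\varepsilon^{2}l_{0}^{2})$ for every $\nu>\varepsilon$, and this is the step where the restriction $\nu>\varepsilon$ is used: without a positive lower bound on $\nu$, no uniform inverse estimate is available.

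First I would rewrite $f_{2}=0$ as the fixed-point equation
\[
\mathbf{x}_{2}=T(\mathbf{x}_{1},\mathbf{x}_{2};\nu):=-(-\nu^{2}\partial_{tt})^{-1}(I-P)\nabla V(\mathbf{a}+\mathbf{x}_{1}+\mathbf{x}_{2}).
\]
Elements of $\Omega_{\rho}$ are uniformly bounded in $C^{0}$ by Sobolev embedding, so $\nabla V$ is Lipschitz on the relevant compact set (in the Newton's cradle case $W''(x)=(3/2)\sqrt{-x}$ for $x\le 0$ is bounded on bounded intervals, so the Lipschitz property persists). The Lipschitz constant of $T$ in $\mathbf{x}_{2}$ is therefore bounded by $C'/(\varepsilon^{2}l_{0}^{2})$ uniformly on $\Omega_{\rho}\times\Lambda_{\varepsilon}$; choosing $l_{0}$ large enough makes $T$ a uniform contraction into $(I-P)\Omega_{\rho}$, producing the unique solution $\mathbf{x}_{2}(\mathbf{x}_{1},\nu)$ continuous in its arguments. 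The equivalence $f(\mathbf{x}_{1}+\mathbf{x}_{2},\nu)=0\Leftrightarrow\phi(\mathbf{x}_{1},\nu)=0$ then follows from the orthogonal decomposition $f=f_{1}+f_{2}$ and the definition of $\phi$.

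For equivariance, the key point is that $P$ commutes with the $D_{n}\times O(2)$-action: the $D_{n}$-representation is coordinate-wise and preserves each Fourier mode, $\rho(\varphi)$ multiplies $x_{l}$ by $e^{il\varphi}$, and $\rho(\bar{\kappa})$ sends $x_{l}$ to $x_{-l}$, all of which leave the band $\{|l|\le l_{0}\}$ invariant. Applying any $g\in D_{n}\times O(2)$ to the fixed-point equation and using the equivariance of $f$ (hence of $f_{2}$ and of $(-\nu^{2}\partial_{tt})^{-1}$) shows that $\rho(g)\mathbf{x}_{2}(\mathbf{x}_{1},\nu)$ solves the same equation with initial datum $\rho(g)\mathbf{x}_{1}$; uniqueness forces $\rho(g)\mathbf{x}_{2}(\mathbf{x}_{1},\nu)=\mathbf{x}_{2}(\rho(g)\mathbf{x}_{1},\nu)$, and $\phi$ inherits the $D_{n}\times O(2)$-equivariance from $f_{1}$.

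The main obstacle is pinning down a single $l_{0}$ that works uniformly over the whole set $\Omega_{\rho}\times\Lambda_{\varepsilon}$, rather than the purely local $l_{0}$ that a standard implicit function theorem would provide; this is precisely why $\rho$ must remain finite and $\varepsilon$ strictly positive, and is the feature that makes the reduction \emph{global} in the sense of \cite{IzVi03}.
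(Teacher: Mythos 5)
Your argument is correct and is essentially the paper's: both proofs rest on the same two uniform estimates --- the gain of a factor $l_{0}^{-2}$ when passing from the $H_{2\pi}^{2}$ norm to the $L_{2\pi}^{2}$ norm on the high Fourier modes, and the lower bound $\nu^{2}l^{2}\geq\varepsilon^{2}l_{0}^{2}$ for $\nu>\varepsilon$, $|l|>l_{0}$ --- and both obtain the equivariance of $\mathbf{x}_{2}(\mathbf{x}_{1},\nu)$ from uniqueness; the paper merely packages these estimates as the uniform coercivity bound $\Vert\partial_{\mathbf{x}_{2}}f_{2}\,y_{2}\Vert_{L_{2\pi}^{2}}\geq(\varepsilon^{2}-C\rho/l_{0}^{2})\Vert y_{2}\Vert_{H_{2\pi}^{2}}$ and invokes Hadamard's global implicit function theorem (Theorem 5.1.5 in \cite{Be77}) rather than running the contraction directly. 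Two small slips that do not affect the outcome: the fixed-point map should read $\mathbf{x}_{2}=(-\nu^{2}\partial_{tt})^{-1}(I-P)\nabla V(\mathbf{a}+\mathbf{x}_{1}+\mathbf{x}_{2})$ without the extra minus sign, and the operator norm of $(-\nu^{2}\partial_{tt})^{-1}$ from $(I-P)L_{2\pi}^{2}$ into $H_{2\pi}^{2}$ is only $O(\varepsilon^{-2})$ --- the factor $l_{0}^{-2}$ in your Lipschitz constant actually comes from the embedding $\Vert y_{2}\Vert_{L_{2\pi}^{2}}\leq l_{0}^{-2}\Vert y_{2}\Vert_{H_{2\pi}^{2}}$ applied to the increment before $\nabla V$, which is exactly the estimate the paper uses.
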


\begin{proof}
If we find a positive constant $\alpha$ such that
\[
\left\Vert \partial_{\mathbf{x}_{2}}f_{2}(\mathbf{x}_{1}+\mathbf{x}_{2}%
)y_{2}\right\Vert _{L_{2\pi}^{2}}\geq\alpha\left\Vert y_{2}\right\Vert
_{H_{2\pi}^{2}}%
\]
for all $(x,\nu)\in\Omega_{\rho}\times\Lambda_{\varepsilon}$, the global
implicit function theorem due to Hadamard (Theorem 5.1.5 in \cite{Be77})
implies existence of a unique function $\mathbf{x}_{2}(\mathbf{x}_{1},\nu)$
such that $f_{2}(\mathbf{x}_{1}+\mathbf{x}_{2}(\nu,\mathbf{x}_{1}),\nu)=0$.
Using the uniqueness of $\mathbf{x}_{2}(\mathbf{x}_{1},\nu)$, it can be proven
that $f_{1}(\mathbf{x}_{1}+\mathbf{x}_{2}(\mathbf{x}_{1},\nu),\nu)$ is
$D_{n}\times O(2)$-equivariant.

Let $y_{2}\in(I-P)H_{2\pi}^{2}$. Since $V\in C^{2}(\mathbb{R}^{n})$ and
$\left\Vert x\right\Vert _{C_{2\pi}^{0}}<c\rho$ for all $x\in\Omega_{\rho}$,
then $\left\Vert D^{2}V(\mathbf{a}+x)\right\Vert _{C_{2\pi}^{0}}<C\rho$ for
all $x\in\Omega_{\rho}$. Therefore,%
\[
\left\Vert (I-P)D^{2}V(\mathbf{a}+x)y_{2}\right\Vert _{L_{2\pi}^{2}}\leq
C\rho\left\Vert y_{2}\right\Vert _{L_{2\pi}^{2}}\leq C(\rho/l_{0}%
^{2})\left\Vert y_{2}\right\Vert _{H_{2\pi}^{2}}\text{.}%
\]
Using the previous estimate and $\left\Vert \nu^{2}\partial_{tt}%
y_{2}\right\Vert _{L_{2\pi}^{2}}\geq\varepsilon^{2}\left\Vert y_{2}\right\Vert
_{H_{2\pi}^{2}}$ for $\nu\in\Lambda_{\varepsilon}$, we have%
\[
\left\Vert \partial_{\mathbf{x}_{2}}f_{2}y_{2}\right\Vert _{L_{2\pi}^{2}}%
\geq\left\Vert \nu^{2}\partial_{tt}y_{2}\right\Vert _{L_{2\pi}^{2}}-\left\Vert
(I-P)D^{2}V(\mathbf{a}+x)y_{2}\right\Vert _{L_{2\pi}^{2}}\geq\alpha\left\Vert
y_{2}\right\Vert _{H_{2\pi}^{2}}\text{,}%
\]
where $\alpha=\varepsilon^{2}-C(\rho/l_{0}^{2})$. We conclude that $\alpha>0$
if $l_{0}>C\sqrt{\rho}/\varepsilon$. Note that the number of Fourier
components $l_{0}$ goes to infinity as $\varepsilon\rightarrow0$ and
$\rho\rightarrow\infty$.
\end{proof}

Using Taylor's expansion of the map $f_{2}(\mathbf{x}_{1}+\mathbf{x}_{2})$, we
obtain the estimate
\begin{equation}
\left\Vert \mathbf{x}_{2}(\mathbf{x}_{1},\nu)\right\Vert _{H_{2\pi}^{2}}\leq
c\left\Vert \mathbf{x}_{1}\right\Vert ^{2} \label{Est}%
\end{equation}
for $x$ close to $0$. Thus, the linearization of the reduced map at $(0,\nu)$
is
\begin{equation}
\phi^{\prime}(0;\nu)\mathbf{x}_{1}=-\nu^{2}\partial_{tt}\mathbf{x}_{1}%
-D^{2}V(\mathbf{a})\mathbf{x}_{1}=\sum_{\left\vert l\right\vert \leq l_{0}%
}M(l\nu)x_{l}\text{,}%
\end{equation}
where
\[
M(l\nu)=(l\nu)^{2}I-D^{2}V(\mathbf{a}).
\]

\subsection{Irreducible representations}

In this section we identify the irreducible representations of $O(2)\times
D_{n}$. In the $l$-th Fourier component, the action of $O(2)$ is%
\[
\rho(\varphi)x_{l}=e^{li\varphi}x_{l},\qquad\rho(\bar{\kappa})x_{l}=\bar
{x}_{l}\text{.}%
\]
The Fourier components are subrepresentations of the group $O(2)$; then, we
need to find the irreducible representations of $x_{l}\in\mathbb{C}^{n}$ under
the action of $D_{n}$.

Let $\zeta=2\pi/n$ and $e_{k}\in\mathbb{C}^{n}$ be
\begin{equation}
e_{k}=n^{-1/2}(e^{1(ik\zeta)},e^{2(ik\zeta)},...,e^{n(ik\zeta)})\text{.}%
\end{equation}
The vectors $e_{k}$ for $k=1,...,n$ are orthonormal and their direct sum is
the whole space $\mathbb{C}^{n}$. Therefore, we can expand $x_{l}=\sum
_{k\in\mathbb{Z}_{n}}x_{k,l}$, where $\mathbb{Z}_{n}=\{1,...,n\}$, and%
\[
x(t)=\sum_{(k,l)\in\mathbb{Z}_{n}\times\mathbb{Z}}x_{k,l}e_{k}e^{ilt}\text{.}%
\]

\begin{proposition}
\label{Ac}For $k=n/2,n$ the action of the group $D_{n}\times O(2)$ in
$x_{k,1}\in\mathbb{C}$ is given by
\begin{equation}
\rho(\zeta,\varphi)x_{k,1}=\pm e^{i\varphi}x_{k,1}\text{,}\quad\rho
(\kappa)x_{k,1}=x_{k,1}\text{,}\quad\rho(\bar{\kappa})x_{k,1}=\bar{x}%
_{k,1}\text{,} \label{AcR}%
\end{equation}
with negative sign for $k=n/2$. For $k\in\lbrack1,n/2)\cap\mathbb{N}$, the
action in $(x_{k,1},x_{n-k,1})\in\mathbb{C}^{2}$\ is given by%
\begin{align}
\rho(\zeta,\varphi)(x_{k,1},x_{n-k,1})  &  =e^{i\varphi}(e^{ik\zeta}%
x_{k,1},e^{-ik\zeta}x_{n-k,1})\text{, }\\
\rho(\kappa)(x_{k,1},x_{n-k,1})  &  =(x_{n-k,1},x_{k,1})\text{, }\nonumber\\
\rho(\bar{\kappa})(x_{k,1},x_{n-k,1})  &  =(\bar{x}_{n-k,1},,\bar{x}%
_{k,1})\text{.}\nonumber
\end{align}

\end{proposition}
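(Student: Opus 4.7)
The plan is to compute the action of each generator of $D_n \times O(2)$ directly on the joint basis $\{e_k e^{ilt}\}_{(k,l)\in\mathbb{Z}_n\times\mathbb{Z}}$ of (the complexification of) $L^2_{2\pi}(\mathbb{R}^n)$, and then specialize to $l=1$ to read off the induced action on the coordinates $x_{k,1}$. The only case split is whether the $\kappa$-orbit of $e_k$ is a singleton, which happens exactly when $n-k \equiv k \pmod{n}$, i.e.\ $k \in \{n/2, n\}$, or a genuine pair $\{e_k, e_{n-k}\}$.

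First I would verify the two $D_n$ generators on $e_k$ from Definition \ref{Def}. Using the $j$-th coordinate formula $(e_k)_j = n^{-1/2} e^{ijk\zeta}$, the cyclic shift gives $(\rho(\zeta) e_k)_j = (e_k)_{j+1} = e^{ik\zeta}(e_k)_j$, hence $\rho(\zeta) e_k = e^{ik\zeta} e_k$. For the reflection, $(\rho(\kappa) e_k)_j = (e_k)_{n-j} = n^{-1/2} e^{-ijk\zeta}$, using $e^{ink\zeta}=1$, and this is $(e_{n-k})_j$, so $\rho(\kappa) e_k = e_{n-k}$ (with the convention $e_0 = e_n$). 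The $O(2)$-action on the time factor is immediate: $\rho(\varphi) e^{ilt} = e^{il\varphi} e^{ilt}$ and $\rho(\bar\kappa) e^{ilt} = e^{-ilt}$, in agreement with the Fourier formulas already stated in the previous proposition.

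With these in hand I specialize to $l=1$. For $k \in \{n/2, n\}$ one has $e_{n-k} = e_k$, so $\mathbb{C}\, e_k$ is one-dimensional and stable under $\kappa$; the action of $\zeta$ is scalar multiplication by $e^{ik\zeta}$, which equals $+1$ for $k=n$ and $-1$ for $k=n/2$, and inserting the $\varphi$-factor yields $\rho(\zeta,\varphi) x_{k,1} = \pm e^{i\varphi} x_{k,1}$. For $k \in [1, n/2) \cap \mathbb{N}$ the indices $k$ and $n-k$ are distinct, so $\kappa$ swaps $e_k \leftrightarrow e_{n-k}$, producing the coordinate swap $(x_{k,1}, x_{n-k,1}) \mapsto (x_{n-k,1}, x_{k,1})$, while $\zeta$ scales $e_k$ by $e^{ik\zeta}$ and $e_{n-k}$ by $e^{i(n-k)\zeta} = e^{-ik\zeta}$, giving the claimed formula.

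The one subtlety worth handling carefully, and essentially the only substantive point in the argument, is $\bar\kappa$. Since $x(t)$ is real, its complex Fourier coefficients must satisfy the reality condition obtained from $\bar e_k = e_{n-k}$, namely $\overline{x_{k,l}} = x_{n-k,-l}$. Combining this with $\rho(\bar\kappa) e^{ilt} = e^{-ilt}$, the induced action on coordinates reads $\rho(\bar\kappa) x_{k,1} = x_{k,-1} = \bar x_{n-k,1}$, which collapses to $\bar x_{k,1}$ precisely in the two fixed cases $k = n/2, n$. Apart from this reality bookkeeping the proof is a direct verification on the basis $\{e_k e^{ilt}\}$, and I do not anticipate any genuine obstacle.
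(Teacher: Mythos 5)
Your proposal is correct and follows essentially the same route as the paper: a direct computation of the generators $\zeta$, $\kappa$, $\bar\kappa$ on the basis vectors $e_k$, yielding $\rho(\zeta)e_k=e^{ik\zeta}e_k$, $\rho(\kappa)e_k=e_{n-k}$, and the conjugate-swap for $\bar\kappa$ via $\bar e_k=e_{n-k}$ and the reality of $x(t)$. Your explicit bookkeeping of the reality condition $\overline{x_{k,l}}=x_{n-k,-l}$ just spells out what the paper's one-line identity $\rho(\bar\kappa)(e_k z)=e_{n-k}\bar z$ encodes.
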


\begin{proof}
The actions of $\zeta$ and $\kappa$ in $e_{k}$ are
\begin{align*}
\rho(\zeta)e_{k}  &  =n^{-1/2}(e^{2(ik\zeta)},e^{3(ik\zeta)},...,e^{n(ik\zeta
)},e^{(ik\zeta)})=e^{ik\zeta}e_{k}\text{,}\\
\rho(\kappa)e_{k}  &  =(e^{(n-1)ik\zeta},e^{(n-2)ik\zeta},...,e^{2ik\zeta
},e^{1ik\zeta},e^{nik\zeta})=e_{n-k}\text{.}%
\end{align*}
Moreover, the action of $\bar{\kappa}\in O(2)\ $is
\[
\rho(\bar{\kappa})e_{k}z=n^{-1/2}(e^{-ik\zeta}\bar{z},e^{-2ik\zeta}\bar
{z},...,e^{-(n-1)ik\zeta}\bar{z},e^{-nik\zeta}\bar{z})=e_{n-k}\bar{z}\text{.}%
\]
The result follows.
\end{proof}

By the previous proposition, the subspaces generated by $e_{n}$, $e_{n/2}$ and
$e_{k}\oplus e_{n-k}$\ for $k\in\lbrack1,n/2)\cap\mathbb{N}$ are irreducible representations.

\subsection{Isotropy groups}

In this section, we identify the maximal isotropy groups of the irreducible
representations presented in Proposition \ref{Ac}.

The action of $D_{n}\times O(2)$ in the subspaces generated by $e_{n}$ and
$e_{n/2}$ are given by (\ref{AcR}), where the minus sign corresponds to the
representation generated by $e_{n/2}$. The subspace $x_{n,1}\in\mathbb{R}$ has
isotropy group
\begin{equation}
T_{n}=\left\langle (\zeta,0),(\kappa,0),(0,\bar{\kappa})\right\rangle \text{,}%
\end{equation}
and the subspace$\mathbb{\ }x_{n/2,1}\in\mathbb{R}$,%
\begin{equation}
T_{n/2}=\left\langle (\zeta,\pi),(\kappa,0),(0,\bar{\kappa})\right\rangle
\text{.}%
\end{equation}
Both isotropy groups $T_{k}$ for $k=n,n/2$ have fixed point spaces of
dimension one.

\begin{definition}
Let $h$ be the maximum common divisor of $k$ and $n$,
\[
\bar{k}=\frac{k}{h}\text{ and }\bar{n}=\frac{n}{h}\text{.}%
\]

\end{definition}

The cases $k\in\lbrack1,n/2)\cap\mathbb{N}$ are analyzed in the appendix. We
reproduce here the main results.

\begin{proposition}
For $k\in\lbrack1,n/2)\cap\mathbb{N}$, the representation $e_{k}\oplus
e_{n-k}$ has three maximal isotropy groups with fixed point spaces of
dimension one. An isotropy group is
\[
T_{k}=\left\langle (\zeta,-k\zeta),(\kappa,\bar{\kappa}),(\bar{n}%
\zeta,0)\right\rangle \text{.}%
\]
For $\bar{n}$ odd, the other isotropy groups are%
\[
S_{k}=\left\langle (\kappa,0),(0,\bar{\kappa}),(\bar{n}\zeta,0)\right\rangle
\text{,\qquad}\widetilde{S}_{k}=\left\langle (\kappa,\pi),(0,\pi\bar{\kappa
}),(\bar{n}\zeta,0)\right\rangle \text{.}%
\]
The case $\bar{n}$ even is given in the appendix.
\end{proposition}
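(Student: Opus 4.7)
The plan is to verify directly from the formulas in Proposition~\ref{Ac} that each of $T_k$, $S_k$, $\widetilde{S}_k$ has a one-dimensional real fixed point space, and then to argue that these are the three maximal isotropy subgroups with this property. A useful preliminary observation is that $(\bar{n}\zeta,0)$ acts trivially on $e_k\oplus e_{n-k}$, since $\rho(\bar{n}\zeta)e_k=e^{i\bar{n}k\zeta}e_k=e^{2\pi i\bar{k}}e_k=e_k$ and similarly on $e_{n-k}$. Hence $(\bar{n}\zeta,0)$ lies in every isotropy group and may be adjoined freely; the effective action is that of the quotient of $D_n\times O(2)$ by the subgroup generated by this element.

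For each proposed group I would compute the fixed set from its generators. The generator $(\zeta,-k\zeta)$ of $T_k$ sends $(x_{k,1},x_{n-k,1})$ to $(x_{k,1},e^{-2ik\zeta}x_{n-k,1})$; since $0<2k<n$ forces $e^{-2ik\zeta}\neq 1$, this imposes $x_{n-k,1}=0$, after which $(\kappa,\bar{\kappa})$ sends $(x_{k,1},0)$ to $(\bar{x}_{k,1},0)$ and forces $x_{k,1}\in\mathbb{R}$. Thus $\mathrm{Fix}(T_k)=\{(r,0):r\in\mathbb{R}\}$. Analogously, $(\kappa,0)$ forces $x_{k,1}=x_{n-k,1}$ and $(0,\bar{\kappa})$ forces reality, giving $\mathrm{Fix}(S_k)=\{(r,r):r\in\mathbb{R}\}$; while $(\kappa,\pi)$ forces $x_{n-k,1}=-x_{k,1}$ and $(0,\pi\bar{\kappa})$ forces reality, giving $\mathrm{Fix}(\widetilde{S}_k)=\{(r,-r):r\in\mathbb{R}\}$.

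For maximality, note that these three fixed lines are pairwise distinct real one-dimensional subspaces of $\mathbb{C}^2$, so any enlargement of one of the three groups by an element of $D_n\times O(2)$ lying outside it must move its fixed line off itself and collapse the fixed space to $\{0\}$. To show there are no other maximal isotropy groups with one-dimensional fixed point space, I would parametrize a candidate generator $(z,w)\in\mathbb{C}^2$ of a real line and analyze which elements $(\zeta^j\kappa^a,\varphi\bar{\kappa}^b)$, with $a,b\in\{0,1\}$, can preserve this line. Writing out the action and splitting on $a,b$ and on whether $z$ or $w$ vanishes, together with the constraints from $\gcd(\bar{k},\bar{n})=1$, reduces the possibilities to the three families above.

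The step I expect to be most delicate is the case $\bar{n}$ even (deferred to the appendix). When $\bar{n}$ is even, $\bar{k}$ is forced to be odd by coprimality, so $\zeta^{\bar{n}/2}$ acts on $e_k\oplus e_{n-k}$ as $-I$; combined with the rotation by $\pi\in O(2)$ this produces an additional element $(\bar{n}\zeta/2,\pi)$ in the effective kernel of the action. This extra central element must be absorbed into every isotropy subgroup, which enlarges the stabilizers of the diagonal and antidiagonal lines and forces a re-labeling or merging of the $S_k$ and $\widetilde{S}_k$ families — the structural change that the appendix treats separately.
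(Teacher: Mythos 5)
Your verification of the fixed point spaces is correct and proceeds along the same lines as the paper: both arguments work directly with the explicit action of Proposition \ref{Ac} on $(x_{k,1},x_{n-k,1})\in e_{k}\oplus e_{n-k}$, and your preliminary observation that $(\bar{n}\zeta,0)$ acts trivially (so that it lies in every isotropy group) is exactly why that generator appears in all three groups. The computations giving $\mathrm{Fix}(T_{k})=\{(r,0)\}$, $\mathrm{Fix}(S_{k})=\{(r,r)\}$ and $\mathrm{Fix}(\widetilde{S}_{k})=\{(r,-r)\}$ for $\bar{n}$ odd are all right.

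The gap is in the maximality and exhaustiveness step, which is where essentially all of the paper's appendix proof lives. Your sentence that ``any enlargement of one of the three groups by an element lying outside it must move its fixed line off itself'' assumes precisely what has to be shown: a priori an element outside the subgroup generated by the listed generators could still fix the line pointwise, in which case the listed group would be a proper subgroup of the true stabilizer and the claim ``an isotropy group is $T_{k}$'' would be false. The paper closes this by an orbit-type analysis: it normalizes an arbitrary nonzero point to $(r,\rho e^{i\psi})$ with $\psi\in[0,\zeta]$ using the group action, writes out $\rho(l\zeta,\varphi)$ and $\rho(\kappa l\zeta,\varphi)$ explicitly, and determines exactly for which $l,\varphi,\psi,\rho$ the point is fixed; this simultaneously identifies the full stabilizers and shows that $(r,0)$, $(r,r)$ and $(r,-r)$ (resp.\ $(r,re^{im\zeta})$) are the only orbit types with one-dimensional fixed point spaces. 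You say you ``would parametrize \dots\ and analyze,'' which is the right plan, but it is not carried out, and it is the substantive computation here. Two smaller points: the paper handles general $k$ by reducing to $k=1$ via the modular inverse $m$ of $\bar{k}$ mod $\bar{n}$ (replacing $\zeta$ by $m\zeta$), which is what produces the correct generators $(\kappa m\zeta,0)$, $(0,m\zeta\bar{\kappa})$, $(\tfrac{\bar{n}}{2}m\zeta,\pi)$ in the even case; and for $\bar{n}$ even the families $S_{k}$ and $\widetilde{S}_{k}$ do not merge --- they remain two distinct orbit types, $(r,r)$ and $(r,re^{im\zeta})$, each simply acquiring the extra central generator you identified.
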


These isotropy groups are relevant because the reduced map
\[
\phi^{H}(x;\nu):\mathrm{Fix}(H)\rightarrow\mathrm{Fix}(H)
\]
is well defined and under non-resonant conditions, the linearization
$D\phi^{H}(0,\nu)$ has a simple eigenvalue in the kernel.

\subsection{Linearization}

The Hessian of $V$ is%
\[
D^{2}V(\mathbf{a})=U^{\prime\prime}(a)I+W^{\prime\prime}(0)A\text{,}%
\]
where $A=(a_{i,j})_{i,j=1}^{n}$ is the matrix defined by $a_{i,j}=2$ if $i=j$,
$a_{i,j}=-1$ if $\left\vert i-j\right\vert =1$ modulus $n$, and $a_{i,j}=0$ otherwise.

\begin{proposition}
The matrix $M(\nu)=\nu^{2}I-D^{2}V(\mathbf{a})$ is diagonal in the basis
$\{e_{k}\}$,
\begin{equation}
x=\sum_{k\in\mathbb{Z}_{n}}x_{k}e_{k},\qquad M(\nu)x=\sum_{k\in\mathbb{Z}_{n}%
}\lambda_{k}(\nu)x_{k}e_{k}\text{,}%
\end{equation}
where the eigenvalues are
\begin{equation}
\lambda_{k}(\nu)=\nu^{2}-U^{\prime\prime}(a)-(2\sin k\zeta/2)^{2}%
W^{\prime\prime}(0).
\end{equation}
\ 
\end{proposition}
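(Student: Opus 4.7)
The plan is to exploit the circulant structure of $A$ and diagonalize it explicitly on the Fourier basis $\{e_k\}$. Since $D^2V(\mathbf{a}) = U''(a) I + W''(0) A$ and the identity acts as a scalar on every vector, the whole problem reduces to showing that each $e_k$ is an eigenvector of $A$ with eigenvalue $(2\sin k\zeta/2)^2$; the eigenvalues of $M(\nu)$ then follow by linearity.

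First I would compute $(A e_k)_j$ componentwise. By the definition of $A$ and the periodic convention $e_k \in \mathbb{C}^n$ with indices modulo $n$, one has
\[
(A e_k)_j = 2(e_k)_j - (e_k)_{j-1} - (e_k)_{j+1} = n^{-1/2} e^{jik\zeta}\bigl(2 - e^{-ik\zeta} - e^{ik\zeta}\bigr).
\]
Using the identity $2 - 2\cos\theta = 4\sin^2(\theta/2)$ with $\theta = k\zeta$, the scalar factor equals $(2\sin k\zeta/2)^2$, giving $A e_k = (2\sin k\zeta/2)^2\, e_k$. This uses only the periodic boundary condition $e_{k,n+1}=e_{k,1}$ built into $A$, and it works uniformly for every $k\in\mathbb{Z}_n$ including the boundary cases $k=n/2$ and $k=n$.

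Combining this with $I e_k = e_k$ yields $D^2V(\mathbf{a}) e_k = \bigl(U''(a) + W''(0)(2\sin k\zeta/2)^2\bigr) e_k$, hence
\[
M(\nu) e_k = \bigl(\nu^2 - U''(a) - (2\sin k\zeta/2)^2 W''(0)\bigr) e_k = \lambda_k(\nu)\, e_k.
\]
Since $\{e_k\}_{k\in\mathbb{Z}_n}$ is an orthonormal basis of $\mathbb{C}^n$ (as already observed in Section 1.2), expanding $x=\sum_k x_k e_k$ and applying $M(\nu)$ termwise gives the diagonal formula in the statement.

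There is no real obstacle here: the argument is a direct calculation. The only thing to be careful about is the indexing convention in the definition of $e_k$, which runs from $1$ to $n$ rather than $0$ to $n-1$, but this produces only an overall phase $e^{ik\zeta}$ in $\rho(\zeta) e_k$ (as already computed in the proof of Proposition \ref{Ac}) and does not affect the eigenvalue computation. Thus the whole proof is one circulant eigenvalue computation plus the trivial observation that a scalar multiple of the identity is diagonal in every basis.
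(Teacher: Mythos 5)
Your proposal is correct and follows essentially the same route as the paper's own proof: both compute $(Ae_k)_j$ componentwise using the periodic structure of $A$, factor out the scalar $2-e^{ik\zeta}-e^{-ik\zeta}=4\sin^2(k\zeta/2)$, and combine with the scalar action of $U''(a)I$ by linearity. No differences worth noting.
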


\begin{proof}
Since the $j$-th coordinate of $e_{k}$ is $n^{-1/2}e^{ij\zeta}$, then the
$j$-th coordinate of $Ae_{k}$ is
\[
(2-(e^{-ik\zeta}+e^{ik\zeta}))n^{-1/2}e^{ij\zeta}=4\sin^{2}(k\zeta
/2)n^{-1/2}e^{ij\zeta}\text{.}%
\]
We conclude that $Ae_{k}=4\sin^{2}(k\zeta/2)e_{k}$ and%
\[
M(\nu)e_{k}=[\nu^{2}I-U^{\prime\prime}(a)]e_{k}-W^{\prime\prime}%
(0)Ae_{k}=\lambda_{k}(\nu)e_{k}\text{.}%
\]

\end{proof}

The fact that $\lambda_{n-k}=\lambda_{k}$ for $k\in\lbrack1,n/2)\cap
\mathbb{N}$ is consequence of Schur's lemma. That is, since $e_{k}\oplus
e_{n-k}$ is an irreducible representation, Schur's lemma implies that
$M(\nu)=\lambda I$ in $e_{k}\oplus e_{n-k}$.

In the basis $x_{k,l}$, we have
\begin{equation}
\phi^{\prime}(0;\nu)\mathbf{x}_{1}=\sum_{\left\vert l\right\vert \leq l_{0}%
}\sum_{k\in\mathbb{Z}_{n}}\lambda_{k}(l\nu)x_{k,l}e_{k}e^{ilt}\text{.}%
\end{equation}

\section{Main results:\ Bifurcation theorems}

We proceed to prove two bifurcation theorems.

\subsection{Coupled pendula}

If $W^{\prime\prime}(0)\neq0$, we avoid resonant frequencies by assuming that
$D^{2}V(\mathbf{a})$ is invertible, which holds if $U^{\prime\prime}(a)$ and
$W^{\prime\prime}(0)$ are positive.\ 

\begin{definition}
\label{nonres}We say that the frequency
\begin{equation}
\nu_{k}=\sqrt{U^{\prime\prime}(a)+(2\sin k\pi/n)^{2}W^{\prime\prime}(0)}%
\end{equation}
is non-resonant if $l\nu_{k}\neq\nu_{j}$ for $j\in(k,n/2]\cap\mathbb{N}$ and
$l\geq2$.
\end{definition}

\begin{theorem}
\label{Thm1} Assume $W^{\prime\prime}(0)\neq0$ and $D^{2}V(\mathbf{a})$ is
invertible. For each $k\in\lbrack0,n/2]\cap\mathbb{N}$ such that $\nu_{k}>0$
is non-resonant, the equilibrium $\mathbf{a}$ has three global bifurcations of
$2\pi/\nu$-periodic solutions emanating from $\nu=\nu_{k}$ with isotropy
groups $T_{k}$, $S_{k}$, and $\widetilde{S}_{k}$.
\end{theorem}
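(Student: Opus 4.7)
The strategy is to apply Brouwer-degree bifurcation theory to the restrictions of the reduced map $\phi(\mathbf{x}_{1},\nu)$ to the finite-dimensional fixed-point subspaces $\mathrm{Fix}(H)$ of the three isotropy groups $H\in\{T_{k},S_{k},\widetilde{S}_{k}\}$. By $D_{n}\times O(2)$-equivariance, each restriction $\phi^{H}:\mathrm{Fix}(H)\to\mathrm{Fix}(H)$ is a well-defined smooth map between finite-dimensional vector spaces, and every nontrivial zero of $\phi^{H}$ lifts through the global Lyapunov--Schmidt reduction to a genuine $2\pi/\nu$-periodic solution of (\ref{Ec}) with isotropy at least $H$.

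The first step is to diagonalize the linearization $\phi'(0;\nu)$ in the basis $\{e_{k'}e^{ilt}\}_{k'\in\mathbb{Z}_{n},\,|l|\le l_{0}}$ using the formula from Section~1.3, so that $\phi'(0;\nu)(x_{k',l}e_{k'}e^{ilt})=\lambda_{k'}(l\nu)x_{k',l}e_{k'}e^{ilt}$. Intersecting with $\mathrm{Fix}(H)$ restricts the admissible pairs $(k',l)$: for $H=T_{k}$ the generator $(\zeta,-k\zeta)$ forces $k'\equiv lk\pmod{n}$, the generator $(\bar{n}\zeta,0)$ is automatic once the previous one holds, and $(\kappa,\bar{\kappa})$ imposes the reality relation $x_{n-k',-l}=\bar{x}_{k',l}$; analogous bookkeeping for $S_{k}$ and $\widetilde{S}_{k}$ leaves a similar count of surviving modes. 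The pair $(k',l)=(k,1)$ always survives and contributes the eigenvalue $\lambda_{k}(\nu)$, which vanishes simply at $\nu=\nu_{k}$ with $\lambda_{k}'(\nu_{k})=2\nu_{k}>0$. Any other surviving pair $(k',l)$ has $l\ge 2$ and $k'\equiv lk\pmod{n}$; folding $k'$ into $[0,n/2]$ to some $\tilde{k}'$, we see that the case $\tilde{k}'\le k$ rules out resonance automatically from the inequality $l\nu_{k}>\nu_{k}\ge\nu_{\tilde{k}'}$ (using $W''(0)>0$ for monotonicity of $\nu_{j}$ in $j$), while the case $\tilde{k}'>k$ is ruled out by the non-resonance hypothesis (Definition~\ref{nonres}). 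Hence $\ker\phi'(0;\nu_{k})\cap\mathrm{Fix}(H)$ is one-dimensional and the crossing at $\nu_{k}$ is transverse.

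A simple transverse crossing of a real eigenvalue produces an odd jump in the Brouwer degree $\deg(\phi^{H}(\cdot,\nu),B_{r}(0))$ computed on a small ball in $\mathrm{Fix}(H)$ as $\nu$ passes $\nu_{k}$, which by the Krasnoselskii--Rabinowitz global bifurcation theorem yields a continuum $\mathcal{C}_{H}\subset\mathrm{Fix}(H)\times\Lambda_{\varepsilon}$ of nontrivial zeros of $\phi^{H}$ issuing from $(0,\nu_{k})$. Lifting via $\mathbf{x}_{2}=\mathbf{x}_{2}(\mathbf{x}_{1},\nu)$ gives the corresponding continuum of $2\pi/\nu$-periodic solutions of (\ref{Ec}). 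The Rabinowitz alternative then reads: $\mathcal{C}_{H}$ is either unbounded in the $H^{2}_{2\pi}$-norm, returns to another bifurcation point, or exits the reduction domain $\Omega_{\rho}\times\Lambda_{\varepsilon}$; since the global reduction extends to arbitrarily large $\rho$ and small $\varepsilon$ by letting $l_{0}\to\infty$, the last possibility translates into blow-up of the Sobolev norm or $\nu\to 0$ (unbounded period), matching the desired global conclusion.

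The main technical obstacle is the bookkeeping of the second step: for each of the three isotropy groups (and for the different parity cases of $\bar{n}$) one must verify that the only mode contributing to the kernel of $\phi'(0;\nu_{k})|_{\mathrm{Fix}(H)}$ is the distinguished $(k',l)=(k,1)$. The fold-and-compare argument above makes this mechanical once $H$ is fixed, but the change in generators between $T_{k}$, $S_{k}$, $\widetilde{S}_{k}$ must be tracked carefully. The degenerate values $k\in\{n/2,n\}$, where the spatial irreducible representation has real dimension one, are handled separately: the three isotropy groups collapse to a single one and the same argument produces a single branch, consistent with the introductory description.
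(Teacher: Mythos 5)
Your proposal is correct and follows essentially the same route as the paper: global Lyapunov--Schmidt reduction, verification that the linearization has a single transverse eigenvalue crossing at $\nu=\nu_{k}$ on $\mathrm{Fix}(H)$ (using monotonicity of $\nu_{j}$ in $j$ together with the non-resonance condition), Brouwer degree for the local bifurcation in each fixed-point space, and the vanishing of the sum of local degrees on $\Omega_{\rho}^{H}\times\Lambda_{\varepsilon}$ for the global alternative. The only cosmetic difference is ordering: the paper first proves invertibility of the full Fourier blocks $M(l\nu_{k})$ for all $l\neq1$ (using the hypothesis that $D^{2}V(\mathbf{a})$ is invertible for the $l=0$ block, which your statement that all other surviving modes have $l\ge2$ silently skips) and only then restricts to $\mathrm{Fix}(H)$, whereas you restrict first and fold the surviving modes.
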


\begin{proof}
The linear map $\phi^{\prime}(0;\nu)$ has $l$-th Fourier block $M(l\nu)$ with
eigenvalues
\[
\lambda_{j}(l\nu)=(l\nu)^{2}-\nu_{j}^{2}\text{,}%
\]
for $j\in\lbrack0,n/2]\cap\mathbb{N}$.

For $l=0$, the matrix $M(0)=-D^{2}V(\mathbf{a})$ is invertible by hypothesis.
For $l\geq2$, since $W^{\prime\prime}(0)\neq0$, frequencies $\nu_{j}$ are
increasing in $j$. Thus $l\nu_{k}>\nu_{j}$ for $j\in\lbrack0,k]\cap\mathbb{N}$
and, by hypothesis, $l\nu_{k}\neq\nu_{j}$ for $j\in(k,n/2]\cap\mathbb{N}$.
Therefore, matrices $M(l\nu_{k})$ are invertible for $l\geq2$. Given that
$M(l\nu)$ is continuous in $\nu$, the blocks $M(l\nu)$ are invertible for
$\nu$ close to $\nu_{k}$ .

For $l=1$, since $\nu_{k}$ is increasing in $k\in\lbrack0,n/2]\cap\mathbb{N}$,
then $\nu_{k}^{2}-\nu_{j}^{2}\neq0$ for $j\neq k$. Therefore, the
linearization $\phi^{\prime}(0;\nu)$ is invertible for $\nu$ close to $\nu
_{k}$, except for the block that corresponds to the representation
$(x_{k,1},x_{n-k,1})\in e_{k}\oplus e_{n-k}$,%
\begin{equation}
\lambda_{k}(\nu)I:\mathbb{C}^{2}\rightarrow\mathbb{C}^{2}\text{.} \label{B}%
\end{equation}

Set $H$ equal to $T_{k}$, $S_{k}$ or $\widetilde{S}_{k}$. Since the group $H$
has fixed point spaces of real dimension equal to one in $e_{k}\oplus e_{n-k}%
$, the restriction $D\phi^{H}(0;\nu_{k})$ has a simple eigenvalue crossing
zero in (\ref{B}). Using Brouwer degree as in Theorem 14 in \cite{GaIz11} or
\cite{IzVi03}, we conclude existence of a local bifurcation in the fixed point
space of $H$.

The global property follows from assuming that the branch is contained in the
set $\Omega_{\rho}^{H}\times\Lambda_{\varepsilon}$ for some $\varepsilon$ and
$\rho$, unless it is an unbounded continuum set with period or Sobolev norm
going to infinite. Applying Brouwer degree to the reduction in $\Omega_{\rho
}^{H}\times\Lambda_{\varepsilon}$, we conclude that the sum of the local
degrees at the bifurcation points is zero, as in Theorem 5.2 in \cite{IzVi03}
or Theorem 15 in \cite{GaIz11}.
\end{proof}

\begin{remark}
In the case $U=0$, the Hessian $D^{2}V(\mathbf{a})$ has a zero-eigenvalue
corresponding to the conserved quantity $\sum_{j=1}^{n}q_{j}=0$. If this is
the only zero-eigenvalue, we can extended the previous theorem using the
restriction of $f$ to the subspace%
\[
X=\{x\in L_{2\pi}^{2}:\sum_{j=1}^{n}x_{j}=0\}\text{.}%
\]
Also, analogous theorems can be proven in the case of $D_{n}$-equivariant long
range interactions, $W(q_{1},...,q_{n})$.
\end{remark}

Under non-resonant considerations, the local reduction can be realized on the
$1$-th Fourier component
\[
\mathbf{x}_{1}=\sum_{\left\vert l\right\vert =1}x_{l}e^{ilt}\text{.}%
\]

Let $r$ be a parameterization of the amplitude of the local branch, where the
frequency $\nu$ is a functions of $r$. The branch with isotropy group $T_{k}$
has eigenvalues corresponding to the coordinates $(x_{k,1},x_{n-k,1})=(r,0)$
(see the appendix). We conclude that the projected component $\mathbf{x}_{1}$
can be estimated by
\[
\mathbf{x}_{1}(t)=r(e^{it}e_{k}+\overline{e^{it}e_{k}})+\mathcal{O}%
(r^{2})\text{.}%
\]
Moreover, since $e^{it}e_{k}+\overline{e^{it}e_{k}}=2\cos(t+jk\zeta)$ and
$\mathbf{x}_{2}(\mathbf{x}_{1};\nu)=O(r^{2})$, then%
\begin{equation}
x_{j}(t)=2r\cos(t+jk\zeta)+\mathcal{O}(r^{2})\text{,}%
\end{equation}
where $\mathcal{O}(r^{2})$ is a $2\pi$-periodic function of order $r^{2}$.

Similarly, the coordinates for the isotropy group $S_{k}$ are $(x_{k,1}%
,x_{n-k,1})=(r,r)$, then%
\[
\mathbf{x}_{1}(t)=r(e^{it}e_{k}+\overline{e^{it}e_{k}}+e^{it}e_{n-k}%
+\overline{e^{it}e_{n-k}})+\mathcal{O}(r^{2})\text{.}%
\]
We conclude
\begin{equation}
x_{j}(t)=4r\cos jk\zeta\cos t+\mathcal{O}(r^{2})\text{.}%
\end{equation}

For the group $\widetilde{S}_{k}$ and $n$ odd, the coordinates are
$(x_{k,1},x_{n-k,1})=(r,-r)$, then%
\begin{equation}
x_{j}(t)=-4r\sin jk\zeta\sin t+\mathcal{O}(r^{2})\text{.}%
\end{equation}
For $n$ even, the coordinates are $(x_{k,1},x_{n-k,1})=(r,re^{i\zeta})$, then%
\begin{equation}
x_{j}(t)=-4r\sin(jk\zeta-\zeta/2)\sin(t+\zeta/2)+\mathcal{O}(r^{2})\text{.}%
\end{equation}

Note that for $n$ odd, since $\sin(2\pi j/n)=0$ for $j\in\{n,n/2\}\cap
\mathbb{N}$ and $\cos(2\pi j/n)\neq0$, standing waves $S_{k}$ and
$\widetilde{S}_{k}$ have nodes of different orders, $r$ and $r^{2}$ respectively.

\subsection{Newtons's cradle}

For Newtons's cradle, $W$ is given by (\ref{W}) and $U(x)=\omega^{2}(1-\cos
x)$. Since $\nu_{k}=\omega$ for $k=1,..,n$, the previous theorem cannot
provide the existence of many periodic solutions. In this case, the
application of Weinstein--Moser theorem guaranties the existence of at least
$n$ periodic solutions in each constant energy surface,%
\begin{equation}
H(q,p)=\frac{1}{2}\left\Vert p\right\Vert ^{2}+V(q)\text{.}\label{Ham}%
\end{equation}
However, these solutions may agree with the $n$ traveling waves found in
\cite{Ja11}, and the symmetries have to be considered in order to obtain new solutions.

\begin{definition}
Let $S=S^{1}$ and $\widetilde{S}$ be the group generated by $(\kappa,\pi)$
$(0,\pi\bar{\kappa})$ if $n$ is odd, and $(\kappa\zeta,0)$ and $(0,\zeta
\bar{\kappa})$ if $n$ is even.
\end{definition}

We will prove existence of standing waves using the fixed point spaces of the
groups $S$ and $\widetilde{S}$, which are the isotropy groups $S_{1}$ and
$\widetilde{S_{1}}$ without the generator $(\pi,\pi)$.

Since $\nu_{k}=\omega$ for all $k$, the $1$-Fourier component is non-resonant
with other Fourier components. Thus the local reduction of Section 1.1 can be
realized on the $1$-th Fourier component,%
\[
\mathbf{x}_{1}=x_{1}e^{it}+\bar{x}_{1}e^{-it}\text{,}%
\]
and the local bifurcation map is defined by $\phi(\mathbf{x}_{1}%
;\nu):\mathbb{C}^{n}\rightarrow\mathbb{C}^{n}$ for $(x,\nu)$ close to
$(0,\omega)$.

Since the operator $f(x)$ is the gradient of%
\[
F(x)=\int_{0}^{2\pi}\left(  \frac{\nu^{2}}{2}\left\vert \mathcal{\partial}%
_{t}x\right\vert ^{2}-V(x)\right)  dx\text{,}%
\]
we can conclude (see Section 1.9 in \cite{IzVi03}) that $\phi(\mathbf{x}_{1})$
is the gradient of the reduced potential%
\[
\Phi(\mathbf{x}_{1})=F(\mathbf{x}_{1}+\mathbf{x}_{2}(\mathbf{x}_{1}))\text{.}%
\]
That is, $\Phi^{\prime}(\mathbf{x}_{1})h=\left\langle \phi(\mathbf{x}%
_{1}),h\right\rangle _{L_{2\pi}^{2}}$.

Since Lyapunov-Schmidt reductions preserve equivariant properties, the
potential $\Phi(x_{1},\nu)$ is $D_{n}\times O(2)$-invariant. Setting $H$ equal
to $S$ or $\widetilde{S}$, this implies that the restriction $\Phi
^{H}:\mathrm{Fix}(H)\rightarrow\mathbb{R}$ is well defined and the gradient is%
\[
\nabla\Phi^{H}=\phi^{H}:\mathrm{Fix}(H)\rightarrow\mathrm{Fix}(H)\text{.}%
\]
Furthermore, the element $\pi\in O(2)$ is contained in the Weyl group of $H$
and acts multiplying by $-1$ the fixed point space of $H$ (see appendix), then
$\Phi^{H}(x_{1})=\Phi^{H}(-x_{1})$.

Using $\mathbf{a}=0$, $V(0)=0$, $D^{2}V(0)=\omega^{2}I$ and $\mathbf{x}%
_{2}(\mathbf{x}_{1};\nu)=\mathcal{O}(\left\vert \mathbf{x}_{1}\right\vert
^{2})$, we estimate
\begin{equation}
\Phi(x_{1},\nu)=2\pi(\nu^{2}-\omega^{2})\left\vert x_{1}\right\vert
^{2}+o(\left\vert x_{1}\right\vert ^{2})\text{.}%
\end{equation}
Therefore, the Fadell-Rabinowitz theorem in \cite{FaRa77} implies that the odd
potential $\Phi^{H}(\mathbf{x}_{1},\nu)$ has at least $\dim\mathrm{Fix}(H)$
branches of critical points. Since $\dim\mathrm{Fix}(H)\geq n/2-1$ for $H$
equal to $S$ and $\widetilde{S}$ (see the appendix), the following theorem holds.

\begin{theorem}
\label{Thm2}Set $H$ equal to $S$ or $\widetilde{S}$. If $(\mathbf{x}_{1}%
,\nu)=(0,\omega)\ $is an isolated point of the potential $\Phi^{H}%
(\mathbf{x}_{1},\nu)$, then $f(x,\nu)$ has at least $d_{-}$ zeros for
$\nu<\omega$ and $d_{+}$ for $\nu>\omega$ in the fixed point space of $H$. The
zeros converge to $(0,\omega)$ as $\nu\rightarrow\omega$ and
\[
d_{-}+d_{+}\geq n/2-1\text{.}%
\]

\end{theorem}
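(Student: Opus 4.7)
The plan is to turn the bifurcation question into a variational problem on a finite-dimensional, $\mathbb{Z}_2$-symmetric fixed-point space and then invoke the Fadell-Rabinowitz theorem, assembling the pieces already prepared in Sections~1.1 and in the paragraphs leading up to the theorem.

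First I would complete the Lyapunov--Schmidt reduction from Section~1.1, using that $\nu_k=\omega$ for every $k$ so that only the $l=1$ Fourier block is resonant; hence the reduced map $\phi(\mathbf{x}_1,\nu)$ is defined on $\mathbb{C}^n$ near $(0,\omega)$. Because the operator $f$ is the gradient of $F$ and the Lyapunov--Schmidt reduction of a gradient is again a gradient, one has $\phi=\nabla\Phi$ with $\Phi(\mathbf{x}_1,\nu)=F(\mathbf{x}_1+\mathbf{x}_2(\mathbf{x}_1,\nu))$, and the implicit-function estimate $\mathbf{x}_2(\mathbf{x}_1,\nu)=\mathcal{O}(|\mathbf{x}_1|^2)$ is available. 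Critical points of $\Phi(\cdot,\nu)$ in $\mathbb{C}^n$ correspond to zeros of $f(\cdot,\nu)$ in $H^2_{2\pi}$.

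Next I would restrict to $\mathrm{Fix}(H)$ for $H\in\{S,\widetilde{S}\}$. Because the Lyapunov--Schmidt reduction preserves equivariance, $\Phi$ is $D_n\times O(2)$-invariant, so $\Phi^H:\mathrm{Fix}(H)\to\mathbb{R}$ is well defined and $\nabla\Phi^H=\phi^H$. The key symmetric input, taken from the appendix, is that $\pi\in O(2)$ lies in the Weyl group $N(H)/H$ for both choices of $H$ and acts on $\mathrm{Fix}(H)$ as multiplication by $-1$; therefore $\Phi^H(-\mathbf{x}_1,\nu)=\Phi^H(\mathbf{x}_1,\nu)$, putting the problem into the $\mathbb{Z}_2$-equivariant setting required by Fadell--Rabinowitz. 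Using $V(0)=0$, $D^2V(0)=\omega^2 I$, and the quadratic vanishing of $\mathbf{x}_2$, I would Taylor expand to get $\Phi^H(\mathbf{x}_1,\nu)=2\pi(\nu^2-\omega^2)|\mathbf{x}_1|^2+o(|\mathbf{x}_1|^2)$, so the Hessian at $\mathbf{x}_1=0$ is a scalar multiple of the identity on $\mathrm{Fix}(H)$ that crosses zero as $\nu$ passes through $\omega$.

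Finally, assuming $(0,\omega)$ is an isolated critical point of $\Phi^H$ (the standing hypothesis of the theorem), the Fadell--Rabinowitz theorem of \cite{FaRa77} for even potentials with a simple Hessian sign change produces at least $\dim\mathrm{Fix}(H)$ nontrivial critical orbits bifurcating from $(0,\omega)$, split between the two sides of $\omega$ as $d_-$ and $d_+$ with $d_-+d_+\geq\dim\mathrm{Fix}(H)$. The appendix delivers $\dim\mathrm{Fix}(H)\geq n/2-1$ for $H=S$ and $H=\widetilde{S}$, which closes the argument. The main obstacle is the symmetry bookkeeping deferred to the appendix — specifically, showing that $\pi$ normalizes $H$ and acts by $-1$ on $\mathrm{Fix}(H)$, together with the dimension count $\dim\mathrm{Fix}(H)\geq n/2-1$; once these are in hand, the gradient structure, the quadratic expansion of $\Phi^H$, and the citation of \cite{FaRa77} finish the proof without additional analysis.
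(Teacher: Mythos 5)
Your proposal follows essentially the same route as the paper: the Lyapunov--Schmidt reduction to the resonant first Fourier block, the gradient structure $\phi=\nabla\Phi$, the restriction to $\mathrm{Fix}(H)$ with the evenness of $\Phi^{H}$ coming from $\pi\in O(2)$ in the Weyl group, the quadratic expansion $\Phi=2\pi(\nu^{2}-\omega^{2})\left\vert x_{1}\right\vert^{2}+o(\left\vert x_{1}\right\vert^{2})$, the Fadell--Rabinowitz theorem, and the dimension count $\dim\mathrm{Fix}(H)\geq n/2-1$ from Proposition \ref{WM}. This is exactly the argument the paper assembles in the paragraphs preceding Theorem \ref{Thm2}, so your proof is correct and matches the paper's.
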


Therefore, there are at least $n/2-1$ bifurcations of periodic solutions with
symmetries $S$ and $\widetilde{S}$ emanating from the homogeneous equilibria.
The information about the symmetries of these $n/2-1$ bifurcations can be
improved if one considers similar procedures in the fixed point spaces of
$S_{k}$ and $\widetilde{S}_{k}$.

Another proof of the previous theorem can be given with $\mathbb{Z}_{2}%
$-equivariant Conley index. The idea in \cite{Bar93} consists on using the
invariant property of the Conley index and the change in the unstable set of
the gradient flow generated by $\Phi^{H}$, which has dimension $\dim
\mathrm{Fix}(H)$ for $\nu<\omega$ and $0$ for $\nu>\omega$.

Using Proposition \ref{WM} and Theorem 9.9 in \cite{Bar93}, we can prove that
the energy surface $H^{-1}(\varepsilon)$ for small $\varepsilon$ contains at
least $n/2-1$ solutions with isotropy groups $S$ and $\widetilde{S}$
(Weinstein--Moser theorem). These kinds of solutions are called brake orbits
in \cite{Bar93}. A Weinstein--Moser theorem for coupled pendula was proven in
\cite{GoSt02} using results of \cite{Mo88}.

\section{Description of symmetries}

The isotropy groups $T_{k}$, $S_{k}$ and $\widetilde{S}_{k}$ have the
generator $(\bar{n}\zeta,0)$. Functions fixed by $(\bar{n}\zeta,0)$ satisfy
\[
x_{j}(t)=x_{j+\bar{n}}(t)\text{,}%
\]
where $h$ is the maximum common divisor of $k$ and $n$, $\bar{k}=k/h$ and
$\bar{n}=n/h$.

Therefore, the oscillators in these solutions form a wave of length $\bar{n}$
that is repeated $h$ times along the pendula. In the following discussion, we
describe only the wave of length $\bar{n}$,%
\[
(x_{1},...,x_{\bar{n}})\text{.}%
\]

\subsection{Traveling waves}

The group $T_{k}$ has generators $(\zeta,-k\zeta)$ and $(\kappa,\bar{\kappa}%
)$, solutions with isotropy group $T_{k}$ have symmetries%
\begin{equation}
x_{j}(t)=x_{n-j}(-t)=x_{j+1}(t-\bar{k}(2\pi/\bar{n}))\text{,} \label{T}%
\end{equation}
For $\bar{k}=1$, two consecutive oscillators have a phase shift of $2\pi
/\bar{n}$,
\[
x_{j}(t)=x_{j+1}(t-2\pi/\bar{n})\text{.}%
\]
For $\bar{k}\neq1$, the solutions are just permutations of the case $\bar
{k}=1$.

\subsection{Standing waves of the first kind}

Standing waves have different behavior depending on the parity of $\bar{n}%
\ $and $\bar{n}/2$. We present three cases for each kind of standing waves.
Given that cases $k\neq1$ are permutations of $\bar{k}=1$, we present only the
case $\bar{k}=1$.

Hereafter, we denote%
\[
x_{j}^{\ast}(t)=x_{j}(t+\pi).
\]

\subsubsection*{$\bar{n}$ odd}

Since the isotropy group $S_{k}$ has generators $(\kappa,0)$ and $(0,\kappa)$,
functions fixed by $S_{k}$ have symmetries
\[
x_{j}(t)=x_{\bar{n}-j}(t)=x_{j}(-t)\text{.}%
\]
Setting $\bar{n}=2m+1$, the wave of length $\bar{n}$ is%
\begin{equation}
(x_{0},x_{1},...,x_{m},x_{m},...,x_{1})\text{,} \label{1SI}%
\end{equation}
where $x_{j}$ are even functions for $j=0,...,m$.

\subsubsection*{$\bar{n}/2$ odd$\ $}

The isotropy group $S_{k}$ has generators $(\kappa,0)$, $(\frac{\bar{n}}%
{2}\zeta,\pi)$ and $(0,\bar{\kappa})$, solutions in the fixed point space of
$S_{k}$ satisfy
\[
x_{j}(t)=x_{\bar{n}-j}(t)=x_{j+\bar{n}/2}^{\ast}(t)=x_{j}(-t)\text{.}%
\]
Setting $\bar{n}=4m+2$, the wave of length $\bar{n}$ is
\begin{equation}
(x_{0},x_{1},...,x_{m},x_{m}^{\ast},...,x_{1}^{\ast},x_{0}^{\ast},x_{1}^{\ast
},...,x_{m}^{\ast},x_{m},...,x_{1})\text{,} \label{2SI}%
\end{equation}
where $x_{j}$ are even functions for $j=0,...,m$.

\subsubsection*{$\bar{n}/2$ even}

Setting $\bar{n}=4m+4$, we conclude that the wave of length $\bar{n}$ is
\begin{equation}
(x_{0},x_{1},...,x_{m}=x_{m}^{\ast},...,x_{1}^{\ast},x_{0}^{\ast},x_{1}^{\ast
},...,x_{m}^{\ast}=x_{m},...,x_{1})\text{,} \label{3SI}%
\end{equation}
where $x_{j}$ are even functions for $j=0,...,m$ and $x_{m}\ $is $\pi$-periodic.

Observe that, although solutions (\ref{2SI}) and (\ref{3SI}) have the same
isotropy group, the solutions are qualitatively different at the oscillator
$x_{m}$.

\subsection{Standing waves of the second kind}

\subsubsection*{$\bar{n}$ odd}

The isotropy group $\widetilde{S}_{k}$ has generators $(\kappa,\pi)$ and
$(0,\pi\bar{\kappa})$. Functions fixed by $\tilde{S}_{k}$ have symmetries%
\[
x_{j}(t)=x_{\bar{n}-j}(t+\pi)=x_{j}(\pi-t).
\]
Setting $\bar{n}=2m+1$, the wave of length $\bar{n}$ is%
\begin{equation}
(x_{0},x_{1},...,x_{m},x_{m}^{\ast},...,x_{1}^{\ast})\text{,} \label{1SII}%
\end{equation}
where $x_{j}(\cdot+\pi/2)$ are even functions for $j=1,...,m$ and $x_{0}(t)$
is even $\pi$-periodic.

\subsubsection*{$\bar{n}/2$ odd$\ $}

Since the isotropy group $\widetilde{S}_{k}$ has generators $(\kappa\zeta,0)$,
$(\frac{\bar{n}}{2}\zeta,\pi)\ $and $(0,\zeta\bar{\kappa})$, solutions in the
fixed point space of $\widetilde{S}_{k}$ satisfy%
\[
x_{j}(t)=x_{\bar{n}-(j+1)}(t)=x_{\bar{n}/2+j}^{\ast}(t)=x_{j}(2\pi/n-t).
\]
Setting $\bar{n}=4m+2$, the wave of length $\bar{n}$ is%
\begin{equation}
(x_{0},x_{1},...,x_{m}=x_{m}^{\ast},...,x_{1}^{\ast},x_{0}^{\ast},x_{0}^{\ast
},x_{1}^{\ast},...,x_{m}^{\ast}=x_{m},...,x_{1},x_{0})\text{,} \label{2SII}%
\end{equation}
where $x_{j}(\pi/n+\cdot)$ are even for $j=0,...,m$ and $x_{m}$ is $\pi$-periodic.

\subsubsection*{$\bar{n}/2$ even}

For $\bar{n}=4m+4$, we conclude that solutions with isotropy group
$\widetilde{S}_{k}$ satisfy%
\begin{equation}
(x_{0},...,x_{m},x_{m}^{\ast},...,x_{0}^{\ast},x_{0}^{\ast},...,x_{m}^{\ast
},x_{m},...,x_{0}), \label{3SII}%
\end{equation}
where $x_{j}(\pi/n+\cdot)$ are even for $j=0,...,m$.

Solutions (\ref{2SII}) and (\ref{3SII}) are different close to the oscillator
$x_{m}$.

\section{Applications}

In most applications the potential $W$ is convex. If $U$ is also convex, the
frequencies $\nu_{k}$ are always positive; if $U$ is concave, the frequencies
$\nu_{k}$ are positive for $k\in\lbrack k_{0},n/2]\cap\mathbb{N}$.

\subsection{Coupled pendula}

The coupled pendula via torsion springs are governed by the dynamics of
$U(x)=\omega^{2}(1-\cos x)$ and $W(x)=x^{2}/2$. Since $U^{\prime}%
(0)=\omega^{2}\sin x$ and $W^{\prime\prime}(0)=1$, the homogenous equilibria
are $0=(0,...,0)$ and $\pi=(\pi,...,\pi)$.

Given that $U^{\prime\prime}(0)=\omega^{2}$ and $U^{\prime\prime}(\pi
)=-\omega^{2}$, the condition that gives the existence of bifurcation is
\[
\nu_{k}=\sqrt{\pm\omega^{2}+(2\sin k\pi/n)^{2}}>0\text{,}%
\]
where the minus sign correspond to $\pi$. For the $0$ equilibrium, the
frequencies $\nu_{k}$ are always positive. For the $\pi$ equilibrium, despite
the fact that the uncoupled system is unstable, the coupled system has
$2\pi/\nu_{k}$-periodic solutions near the equilibrium for $k\in\lbrack
k_{0},n/2]\cap\mathbb{N}$, where
\[
k_{0}\sim(n/\pi)\arcsin(\omega/2).
\]

In Theorem \ref{Thm1}, the non-resonant condition of $\nu_{k}$ is equivalent
to $\omega^{2}\neq\pm\omega_{l}(j)$ for integers $l\geq1$ and $j>k$, where%

\begin{equation}
\omega_{l}(j):=-\frac{(2\sin k\pi/n)^{2}-(2\sin j\pi/n)^{2}/l^{2}}{1-1/l^{2}%
}\text{.} \label{ojl}%
\end{equation}
For the $0$ equilibrium, the non-resonant condition $\omega^{2}\neq\omega
_{l}(j)$ holds true except for a finite number of resonant parameters
$\omega_{l}(j)$, and for $\pi$, except for countable number of parameters
$-\omega_{l}(j)\rightarrow2\sin k\pi/n$ as $l\rightarrow\infty$.

The same statements are true for the discrete Klein-Gordon equation with
potentials $U=\omega^{2}x^{2}+x^{3}$ and $W(x)=x^{2}/2$ in \cite{Pe}, and for
the bistable potential $U(x)=\omega^{2}(1-x^{2})^{2}/4$ for equilibria with
$a=0$ and $a=\pm1$.

\subsection{FPU and Toda latices}

In this section, we consider oscillators with $U(x)=0$ and nonlinear
interactions%
\[
W(x)=x^{2}/2+\sum_{k=3}^{\infty}\beta_{k}x^{k}\text{.}%
\]
The FPU lattice corresponds to $W(x)=x^{2}/2+\beta x^{3}/3$ and Toda lattice
to $W(x)=e^{-x}+x-1$.

Since $W^{\prime}(0)=1$, then $0$ is an homogeneous equilibrium. Therefore,
for each $k\in\lbrack1,n/2]\cap\mathbb{N}$ such that
\[
\nu_{k}=2\sin k\pi/n
\]
is non-resonant, the equilibrium has three global bifurcating branches of
periodic solutions. The non-resonant condition $\nu_{j}\neq l\nu_{k}$ is
equivalent to $\omega_{l}(j)\neq0$.

Actually, in \cite{Ri01} is proven that resonances ($\nu_{j}=l\nu_{k}$) and
higher order resonances do exist. In these cases, Theorem \ref{Thm1} only
proves the existence of the bifurcations with the higher frequency $\nu
_{j}=l\nu_{k}$. In \cite{Gu09} a two dimensional family of superposed
traveling waves is constructed for resonances ($\nu_{j}=l\nu_{k}$) but in the
context of infinite FPU lattices.

\subsection{Homogeneous lattices}

In this section we discuss the existence of standing waves when $\nu_{k}=0$
for all $k$'s. We reproduce the case $U=0$ and $W(x)=\frac{2}{5}\left\vert
x\right\vert ^{5/2}$ presented in \cite{Ja12}.

Equations (\ref{Ec}) have standing waves of the form $q_{j}(t)=a_{j}q(t)$ when%
\begin{equation}
-\ddot{q}=W^{\prime}(q) \label{DE}%
\end{equation}
and
\begin{equation}
-a_{j}=W^{\prime}(a_{j+1}-a_{j})-W^{\prime}(a_{j}-a_{j-1})\text{.} \label{ae}%
\end{equation}
Since $W(x)$ is convex, equation (\ref{DE}) has only periodic solutions.

Let $b_{j}=W^{\prime}(a_{j}-a_{j-1})$ be the momentum of $a_{j}$. Equation
(\ref{ae}) is equivalent to%
\[
-a_{j}=b_{j+1}-b_{j}\text{,}\qquad a_{j}-a_{j-1}=b_{j}\left\vert
b_{j}\right\vert ^{-1/3}\text{.}%
\]
Let $\phi:\mathbb{R}^{2}\rightarrow\mathbb{R}^{2}$ be%
\begin{equation}
\phi(a,b)=\left(  a+b\left\vert b\right\vert ^{-1/3},b-(a+b\left\vert
b\right\vert ^{-1/3})\right)  \text{.}%
\end{equation}
The orbits $(a_{j})_{j\in\mathbb{Z}}$ of $\phi$ are solutions of (\ref{ae}).

In \cite{Ja11} is shown that the map $\phi$ is conservative and has many
invariant orbits near $(0,0)$. These invariant orbits correspond to periodic
and quasiperiodic (in space) standing waves. These arguments exhibit existence
of standing waves even in the case that topological methods cannot be applied.
Therefore, further investigations are required to establish existence of
standing waves for the beads problem studied in \cite{Ja11} and \cite{StKe12}.

\section*{Appendix}

The action of $D_{n}\times O(2)$ in the irreducible representation
$(z_{1},z_{2})\in e_{1}\oplus e_{n-1}$ is given by
\begin{align}
\rho(\zeta,\varphi)(z_{1},z_{2})  &  =e^{i\varphi}(e^{i\zeta}z_{1},e^{-i\zeta
}z_{2})\text{, }\label{acction}\\
\rho(\kappa)(z_{1},z_{2})  &  =(z_{2},z_{1})\text{,}\nonumber\\
\rho(\bar{\kappa})(z_{1},z_{2})  &  =(\bar{z}_{2},\bar{z}_{1})\text{.}%
\nonumber
\end{align}

\begin{proposition}
The representation (\ref{acction}) has three maximal isotropy groups with
fixed point spaces of real dimension equal to one. The isotropy groups are:%
\[%
\begin{array}
[c]{|c|c|c|c|}\hline
\text{Parity} & \text{Orbit points} & \text{Generators} & \text{Isotropy
group}\\\hline
n=any & (r,0) & (\zeta,-\zeta),(\kappa,\bar{\kappa}) & T_{1}\\\hline
n=odd &
\begin{array}
[c]{c}%
(r,r)\\
(r,-r)
\end{array}
&
\begin{array}
[c]{c}%
(\kappa,0),(0,\bar{\kappa})\\
(\kappa,\pi),(0,\pi\bar{\kappa})
\end{array}
&
\begin{array}
[c]{c}%
S_{1}\\
\widetilde{S}_{1}%
\end{array}
\\\hline
n=even &
\begin{array}
[c]{c}%
(r,r)\\
(r,re^{i\zeta})
\end{array}
&
\begin{array}
[c]{c}%
(\kappa,0),(0,\bar{\kappa}),(\pi,\pi)\\
(\kappa\zeta,0),(0,\zeta\bar{\kappa}),(\pi,\pi)
\end{array}
&
\begin{array}
[c]{c}%
S_{1}\\
\widetilde{S}_{1}%
\end{array}
\\\hline
\end{array}
\text{.}%
\]

\end{proposition}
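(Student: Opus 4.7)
My plan is to split the proof into a verification of the three listed candidates and a classification argument ruling out any further maximal isotropy groups with one-dimensional fixed subspace.

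The verification is a direct computation from (\ref{acction}). For $T_{1}$, the fixed-point equation for $(\zeta,-\zeta)$ reduces to $(z_{1},e^{-2i\zeta}z_{2})=(z_{1},z_{2})$, and since $n\geq3$ makes $e^{-2i\zeta}\neq1$, this forces $z_{2}=0$; subsequently, $(\kappa,\bar\kappa)$ imposes $z_{1}\in\mathbb{R}$, yielding $\mathrm{Fix}(T_{1})=\mathbb{R}(1,0)$. For $S_{1}$ and $\widetilde{S}_{1}$, each of the two reflection-type generators contributes one real-linear constraint; solving the pair gives $\mathbb{R}(1,1)$ and $\mathbb{R}(1,-1)$ in the odd case, and $\mathbb{R}(1,1)$ and $\mathbb{R}(1,e^{i\zeta})$ in the even case. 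In the even case one checks separately that $(\pi,\pi)$ acts as the identity on $\mathbb{C}^{2}$, since $e^{i\pi+i(n/2)\zeta}=1$ on each component, so it is automatically contained in every isotropy and only serves to record the kernel of the representation. In each case the candidate is a full isotropy rather than a proper subgroup of one, because enlarging by any further element would add an independent real-linear condition and collapse the one-parameter fixed line to $\{0\}$.

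For the classification step the decisive observation is that the connected subgroup $\{e\}\times SO(2)\subset D_{n}\times O(2)$ acts freely on $\mathbb{C}^{2}\setminus\{0\}$: $e^{i\varphi}(z_{1},z_{2})=(z_{1},z_{2})$ with $(z_{1},z_{2})\neq0$ forces $\varphi=0$. Hence every isotropy group of a nonzero vector intersects $\{e\}\times SO(2)$ trivially and projects injectively into the finite quotient $(D_{n}\times O(2))/SO(2)\cong D_{n}\times\mathbb{Z}_{2}$. This reduces the search to enumerating subgroups $H_{0}\leq D_{n}\times\mathbb{Z}_{2}$ together with a lifting homomorphism $\psi\colon H_{0}\to SO(2)$ that prescribes the phase component of each generator, and then computing the fixed subspace in $\mathbb{C}^{2}$. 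The finitely many conjugacy classes fall into a cyclic type, whose only nontrivial one-dimensional fix appears for the twist $\psi(\zeta)=-\zeta$ and produces $T_{1}$, and two dihedral types generated by a reflection together with a conjugation, which produce $S_{1}$ (untwisted phases) and $\widetilde{S}_{1}$ (twisted phases); all other combinations yield fixed space $\{0\}$.

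The main obstacle will be bookkeeping, especially handling the two parities of $n$: for even $n$ the central element $(\pi,\pi)$ belongs to the kernel and must be carried through the group presentations, while the reflection of the second dihedral type shifts from $\kappa$ to $\kappa\zeta$ and its companion conjugation from $\bar\kappa$ to $\zeta\bar\kappa$. I expect that once the lifts are organized by the reflection class they represent in $D_{n}\times\mathbb{Z}_{2}$, a short dimension count eliminates every non-listed subgroup and the three families emerge as claimed.
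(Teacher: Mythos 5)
Your verification half is correct, and your overall route is genuinely different from the paper's. The paper does not test candidate groups one by one: it first uses the group action itself to normalize an arbitrary nonzero point to the form $(r,\rho e^{i\psi})$ with $r>0$ and $\psi\in[0,\zeta]$ (applying $\kappa$, the $S^{1}$-phase, the elements $(l\zeta,-l\zeta)$ and $(\kappa,\bar{\kappa})$), and then determines, for a point of this normal form, exactly which elements $(l\zeta,\varphi)$ and $(\kappa l\zeta,\varphi)$ fix it; only the cases $\rho=0$ and $\rho=r$ survive, and the parity of $n$ enters through whether $l\zeta=\pi$ and $\psi=l\zeta$ (mod $\pi$) are solvable. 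This one computation simultaneously classifies all orbit types and exhibits the three groups with one-dimensional fixed spaces. Your alternative --- noting that $\{e\}\times SO(2)$ acts freely on $\mathbb{C}^{2}\setminus\{0\}$, so every isotropy group of a nonzero point injects into the finite group $D_{n}\times\mathbb{Z}_{2}$, and then enumerating lifted subgroups --- is a standard twisted-subgroup classification and would also work; it makes the finiteness of the isotropy groups conceptually transparent, but at the price of a substantially larger enumeration, and with the technical caveat that for elements covering the reflection class of $O(2)$ the lift is a section satisfying a twisted cocycle condition rather than a homomorphism into $SO(2)$, since $O(2)$ is a semidirect, not direct, product.

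Two cautions. First, your argument that each candidate is the \emph{full} isotropy group (``enlarging by any further element would \ldots collapse the one-parameter fixed line to $\{0\}$'') is circular as stated: an element that happens to fix the orbit point enlarges the group without collapsing anything, and ruling out such elements is precisely the computation that must be done. This is only repaired by actually executing your classification step, whose decisive assertion --- that every non-listed lift has fixed space $\{0\}$ --- is the entire content of the proposition and is currently left as bookkeeping. Second, note that maximality is automatic once the three groups are known to be full isotropy groups with one-dimensional fixed spaces: a strictly larger isotropy group of a nonzero point would have its fixed space contained in the same real line, hence equal to it, and would therefore fix the original point, a contradiction. So the real burden of proof sits entirely in the enumeration you have deferred.
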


\begin{proof}
We need to analyze isotropy groups of orbit types. By applying $\kappa$, we
can assume that $z_{1}\neq0$, unless both coordinates are zero. Using the
action of $S^{1}$, we can assume that $(z_{1},z_{2})=(r,\rho e^{i\psi})$ with
$r>0$. Moreover, by the action
\[
\rho(l\zeta,-l\zeta)(r,\rho e^{i\psi})=(r,\rho e^{i(\psi-2l\zeta)})\text{,}%
\]
we have that $\psi\in(-\zeta,\zeta]$, and by the action of $(\kappa
,\bar{\kappa})$ that $\psi\in\lbrack0,\zeta]$. Therefore, we need to analyze
only isotropy groups of
\[
(z_{1},z_{2})=(r,\rho e^{i\psi})\text{ for }\psi\in\lbrack0,\zeta]\text{.}%
\]
Actually, for $n$ odd, the element $2\zeta$\ generate the group $\mathbb{Z}%
_{n}$ and we can take $\psi\in\lbrack0,\zeta/2]$.

The elements $(l\zeta,\varphi),(\kappa l\zeta,\varphi)\in D_{n}\times S^{1}$
act as%
\begin{align*}
\rho(l\zeta,\varphi)(r,\rho e^{i\psi})  &  =(re^{i(l\zeta+\varphi)},\rho
e^{i(\psi-l\zeta+\varphi)})\text{,}\\
\rho(\kappa l\zeta,\varphi)(r,\rho e^{i\psi})  &  =(\rho e^{i(\psi
-l\zeta+\varphi)},re^{i(l\zeta+\varphi)})\text{.}%
\end{align*}
The first coordinate $r$ is fixed by $(l\zeta,\varphi)$ if $\varphi=-l\zeta$,
and by $(\kappa l\zeta,\varphi)$ if $\varphi=l\zeta-\psi$ mod $2\pi$. These
elements act as%
\begin{align}
\rho(l\zeta,-l\zeta)(r,\rho e^{i\psi})  &  =(r,\rho e^{i(\psi-2l\zeta
)})\text{,}\label{mz}\\
\rho(\kappa l\zeta,l\zeta-\psi)(r,\rho e^{i\psi})  &  =(\rho,re^{i(2l\zeta
-\psi)})\text{.} \label{kmz}%
\end{align}
Thus, we need to find out when these elements fix the second coordinate.

For $\rho=0$, elements $(l\zeta,-l\zeta)$ always fix $(r,0)$. For $\rho\neq0$,
elements $(l\zeta,-l\zeta)$ fix $(r,\rho e^{i\psi})$ if $\psi-2l\zeta=\psi$ or
$l\zeta=\pi$ (mod $2\pi$). That is, the element $(\pi,\pi)$ is in the isotropy
group of $(r,\rho e^{i\psi})$ when $n$ is even.

For $\rho\neq r$, elements $(\kappa l\zeta,l\zeta-\psi)$ never fix these
points. For $\rho=r$, elements $(\kappa l\zeta,l\zeta-\psi)$ fix
$(r,re^{i\psi})$ when $2l\zeta-\psi=\psi$ (mod $2\pi$) or $\psi=l\zeta$ (mod
$\pi$). Then $(\kappa\psi,0)$ is in the isotropy group depending on the parity
of $n$. If $n$ is odd, the element $(\kappa,0)$ fixes $\psi=0$, and $\left(
\kappa,\pi\right)  $ fixes the point $\psi=\pi$. If $n$ is even, the element
$(\kappa,0)$ fixes $\psi=0$, and $(\kappa\zeta,0)$ fixes the point $\psi
=\zeta$.

We conclude that the orbit points in the table are fixed by the generators.
Moreover, the fixed point spaces of the isotropy groups are the set of orbit
points (in the table) for $r\in\mathbb{R}$. Therefore, in the irreducible
representation, the fixed point spaces have real dimension equal to one.
\end{proof}

The action of $D_{n}\times O(2)$ in the irreducible representation
$e_{k}\oplus e_{n-k}$ is given in Proposition \ref{Ac}. This action is similar
to (\ref{acction}), except that $\zeta\in D_{n}$ acts as
\[
\rho(\zeta)(z_{1},z_{2})=(e^{ik\zeta}z_{1},e^{-ik\zeta}z_{2})\text{.}%
\]

\begin{proposition}
For $k\in\lbrack1,n/2)\cap\mathbb{N}$, the representation $e_{k}\oplus
e_{n-k}$ has three isotropy groups with fixed point spaces of real dimension
equal to one. The isotropy groups are:%
\[%
\begin{array}
[c]{|c|c|c|c|}\hline
\text{Parity} & \text{Orbit P} & \text{Generators} & \text{Isotropy G}\\\hline
\bar{n}=any & (r,0) & (\zeta,-k\zeta),(\kappa,\bar{\kappa}),(\bar{n}\zeta,0) &
T_{k}\\\hline
\bar{n}=odd &
\begin{array}
[c]{c}%
(r,r)\\
(r,-r)
\end{array}
&
\begin{array}
[c]{c}%
(\kappa,0),(0,\bar{\kappa}),(\bar{n}\zeta,0)\\
(\kappa,\pi),(0,\pi\bar{\kappa}),(\bar{n}\zeta,0)
\end{array}
&
\begin{array}
[c]{c}%
S_{k}\\
\widetilde{S}_{k}%
\end{array}
\\\hline
\bar{n}=even &
\begin{array}
[c]{c}%
(r,r)\\
(r,re^{im\zeta})
\end{array}
&
\begin{array}
[c]{c}%
(\kappa,0),(0,\bar{\kappa}),(\frac{\bar{n}}{2}m\zeta,\pi),(\bar{n}\zeta,0)\\
(\kappa m\zeta,0),(0,m\zeta\bar{\kappa}),(\frac{\bar{n}}{2}m\zeta,\pi
),(\bar{n}\zeta,0)
\end{array}
&
\begin{array}
[c]{c}%
S_{k}\\
\widetilde{S}_{k}%
\end{array}
\\\hline
\end{array}
\text{,}%
\]
where $m\in\mathbb{N}$ is the modular inverse of $\bar{k}$ ($m\bar{k}=1$ mod
$\bar{n}$). Furthermore, the Weyl groups of these isotropy groups contain the
element $\pi\in S^{1}$ that acts multiplying by $-1$ the fixed point spaces.
\end{proposition}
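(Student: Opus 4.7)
The plan is to mimic the proof of the preceding proposition for $e_1\oplus e_{n-1}$, tracking the extra factor $\bar k$ that appears in the rotation rate. First I observe that $(\bar n\zeta,0)$ acts trivially on $e_k\oplus e_{n-k}$, because its effect on the first coordinate is multiplication by $e^{i\bar n k\zeta}=e^{2\pi i\bar k}=1$. Hence $(\bar n\zeta,0)$ lies in the isotropy of every point and must appear as a generator in every row of the table. The effective action factors through the quotient $D_n/\langle\bar n\zeta\rangle$ and has the same algebraic form as (\ref{acction}) but with $\zeta$ now implementing a rotation by $2\pi\bar k/\bar n$. Since $\gcd(\bar k,\bar n)=1$, the image of $\zeta$ still generates the full cyclic subgroup $\mathbb{Z}_{\bar n}$, and the modular inverse $m$ defined by $m\bar k\equiv1\pmod{\bar n}$ tells us that $m\zeta$ realizes the ``unit rotation'' $2\pi/\bar n$.

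Next, by applying $\kappa$ and the $S^1$-subgroup I may normalize an orbit to $(z_1,z_2)=(r,\rho e^{i\psi})$ with $r\geq0$, $\rho\geq0$, and $\psi$ in a fundamental interval (of length $\zeta$, halved when $\bar n$ is odd by the extra element $2\zeta$). Exactly as before, the identities
\begin{align*}
\rho(l\zeta,-lk\zeta)(r,\rho e^{i\psi}) &= (r,\rho e^{i(\psi-2lk\zeta)}),\\
\rho(\kappa l\zeta,lk\zeta-\psi)(r,\rho e^{i\psi}) &= (\rho,r e^{i(2lk\zeta-\psi)}),
\end{align*}
isolate the candidates that fix the first coordinate, and a case split on $\rho=0$, $\rho\neq r$, $\rho=r$ decides which also fix the second. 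The three families $(r,0)$, $(r,\pm r)$ or $(r,re^{im\zeta})$ emerge exactly as in the $k=1$ case; the modular inverse $m$ appears in the $\bar n$ even row because we need $lk\zeta\equiv\zeta_{\mathrm{unit}}=2\pi/\bar n\pmod{2\pi}$, which is solved by $l=m$.

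With the orbit representatives in hand, I would verify entry by entry that the listed generators indeed fix the claimed point, that any larger subgroup would force the orbit to collapse (so maximality holds), and that the real parameterization $r\mapsto(r,\text{fixed shape})$ exhibits a one-real-dimensional fixed point space. For the Weyl-group statement, it suffices to check that $(0,\pi)\in D_n\times O(2)$ commutes with each of the listed generators---this is immediate for the rotational ones, and for $(\kappa,\bar\kappa)$ it follows from $\bar\kappa\,\pi=(-\pi)\bar\kappa=\pi\bar\kappa$ in $O(2)$---and is itself not an element of $H$; its action on the fixed point space is multiplication by $e^{i\pi}=-1$.

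The main obstacle I anticipate is the parity split when $\bar n$ is even. There one must simultaneously track the extra generator $(\tfrac{\bar n}{2}m\zeta,\pi)$, the shifted orbit point $(r,re^{im\zeta})$, and the generator $(\kappa m\zeta,0)$ (or its $\widetilde S_k$ variant $(\kappa m\zeta,0)$ paired with $(0,m\zeta\bar\kappa)$), and show that together these exhaust the isotropy while still cutting out a real line. This is essentially a careful rewriting of the $k=1$, $n$ even case of the previous proposition under the substitution $\zeta\mapsto m\zeta$, $n\mapsto\bar n$, but one has to be attentive because the pre-image of $\pi\in\mathbb{Z}_{\bar n}$ in $D_n$ is $\tfrac{\bar n}{2}m\zeta$ modulo $\bar n\zeta$, which only equals $\tfrac{n}{2}\zeta$ when $m$ is odd.
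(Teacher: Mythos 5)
Your proposal is correct and follows essentially the same route as the paper: both reduce to the preceding $e_{1}\oplus e_{n-1}$ proposition by observing that $(\bar{n}\zeta,0)$ acts trivially, that coprimality of $\bar{k}$ and $\bar{n}$ makes $\zeta$ generate the full cyclic factor, and that the modular inverse $m$ turns $m\zeta$ into the unit rotation $2\pi/\bar{n}$, so the table follows under the substitution $\zeta\mapsto m\zeta$, $n\mapsto\bar{n}$. You merely re-derive explicitly the orbit normalization that the paper imports by citation, and your Weyl-group check matches the paper's.
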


\begin{proof}
The action of $\zeta\in D_{n}$ can be expressed as%
\[
\rho(\zeta)(z_{1},z_{2})=(e^{i\bar{k}(2\pi/\bar{n})}z_{1},e^{-i\bar{k}%
(2\pi/\bar{n})}z_{2}).
\]
The factor $\bar{k}$ acts as a permutation because $\bar{k}$ and $\bar{n}$ are
coprime numbers. Since $m\bar{k}=1$ modulus $\bar{n}$, then
\[
\rho(m\zeta)=(e^{i(2\pi/\bar{n})},e^{-i(2\pi/\bar{n})})\text{, }\rho\left(
\frac{\bar{n}}{2}m\zeta\right)  =-I\text{, }\rho(\bar{n}\zeta)=I\text{.}%
\]
The results in the table can be obtained using the previous proposition with
$m\zeta=2\pi/\bar{n}$ instead of $\zeta$. Furthermore, the element $\pi\in
O(2)$ leaves invariant the fixed point spaces and acts as $-1$. Then $\pi$ is
contained in the Weyl groups.
\end{proof}

The $1$-th Fourier mode $x_{1}\in\mathbb{C}^{n}$ is the direct sum of the
irreducible representations $e_{1}$, $e_{n/2}$ and $e_{k}\oplus e_{n-k}$ for
$k\in\lbrack1,n/2)\cap\mathbb{N}$.\ Let%
\[
\mathrm{Fix}(H)=\{x_{1}\in\mathbb{C}^{n}:\rho(\gamma)x_{1}=x_{1},\gamma\in
H\}.
\]

\begin{proposition}
\label{WM}For $n$ odd,%
\begin{equation}
\dim_{\mathbb{R}}\mathrm{Fix}(S)=n/2+1/2\text{,}\quad\dim_{\mathbb{R}%
}\mathrm{Fix}(\widetilde{S})=n/2-1/2\text{,}%
\end{equation}
and for $n$ even,
\begin{equation}
\dim_{\mathbb{R}}\mathrm{Fix}(S)=n/2+1\text{,}\quad\dim_{\mathbb{R}%
}\mathrm{Fix}(\widetilde{S})=n/2\text{.}%
\end{equation}

\end{proposition}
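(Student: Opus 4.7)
The plan is to use the decomposition of the $l=1$ Fourier mode $\mathbb{C}^n$ into $D_n\times O(2)$-invariant subspaces and compute $\dim_{\mathbb{R}}\mathrm{Fix}(H)$ block by block.

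First, I would write $\mathbb{C}^n=\mathbb{C}e_n\oplus\mathbb{C}e_{n/2}\oplus\bigoplus_{k\in(0,n/2)\cap\mathbb{N}}(\mathbb{C}e_k\oplus\mathbb{C}e_{n-k})$, where the $\mathbb{C}e_{n/2}$ summand appears only for $n$ even. Each summand is $D_n$-invariant (these are the isotypic components) and also $O(2)$-invariant, because $O(2)$ acts only through the Fourier index $l$, contributing the scalar $e^{i\varphi}$ and the conjugation-plus-swap $\rho(\bar\kappa)x_{k,1}=\bar x_{n-k,1}$ from Proposition \ref{Ac}. Hence $\mathrm{Fix}(H)$ respects the decomposition and its real dimension is a sum of block contributions.

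Next, I would restrict each generator of $H$ to each irreducible block. For $S=\langle(\kappa,0),(0,\bar{\kappa})\rangle$, on the block $\mathbb{C}e_k\oplus\mathbb{C}e_{n-k}$ the first generator swaps the two coordinates and the second swaps and conjugates, so the fixed set is $x_{k,1}=x_{n-k,1}\in\mathbb{R}$, one real dimension. On $\mathbb{C}e_n$ the swap is trivial and conjugation forces $x_{n,1}\in\mathbb{R}$; on $\mathbb{C}e_{n/2}$ exactly the same holds since $\kappa$ fixes $e_{n/2}$. Adding the contributions yields $1+(n-1)/2=n/2+1/2$ for $n$ odd and $1+1+(n/2-1)=n/2+1$ for $n$ even.

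For $\widetilde{S}$ the analogous computation uses the twisted generators. When $n$ is odd and $\widetilde{S}=\langle(\kappa,\pi),(0,\pi\bar{\kappa})\rangle$, the two conditions on $\mathbb{C}e_k\oplus\mathbb{C}e_{n-k}$ become $x_{n-k,1}=-x_{k,1}=-\overline{x_{k,1}}$, again a real line, while on $\mathbb{C}e_n$ the generator $(\kappa,\pi)$ acts as multiplication by $-1$, forcing $x_{n,1}=0$. When $n$ is even and $\widetilde{S}=\langle(\kappa\zeta,0),(0,\zeta\bar{\kappa})\rangle$, the identity $\rho(\zeta)e_k=e^{ik\zeta}e_k$ gives $(\rho(\kappa\zeta)x)_{k,1}=e^{-ik\zeta}x_{n-k,1}$ and $(\rho(\zeta\bar{\kappa})x)_{k,1}=e^{i\zeta}\overline{x_{n-k,1}}$, whose combination reduces to $x_{k,1}\in\mathbb{R}\cdot e^{i(1-k)\zeta/2}$ and $x_{n-k,1}=e^{ik\zeta}x_{k,1}$ on each block with $k<n/2$ (one real dimension), while on $\mathbb{C}e_{n/2}$ the action of $\zeta$ as $-1$ forces $x_{n/2,1}=0$ and on $\mathbb{C}e_n$ the reality condition cuts out $\mathbb{R}\cdot e^{i\zeta/2}$. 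Summation gives $(n-1)/2$ and $n/2$ respectively, matching the statement.

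The main obstacle is purely bookkeeping: one subtle point is the block $\mathbb{C}e_{n/2}$ for $\widetilde{S}$ with $n$ even, where $\zeta$ acts as $-1$ rather than $e^{i\zeta}$; correctly tracking this sign is what makes the dimension drop by one relative to $S$. Once each generator is recorded correctly on each irreducible piece, the entire argument parallels the orbit-point analysis already carried out in the preceding isotropy proposition.
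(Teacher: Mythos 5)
Your proposal is correct and follows essentially the same route as the paper's own proof: decomposing the $l=1$ Fourier mode into the irreducible blocks $e_{n}$, $e_{n/2}$ and $e_{k}\oplus e_{n-k}$ for $k\in[1,n/2)\cap\mathbb{N}$, working out the fixed-point condition of each generator of $S$ and $\widetilde{S}$ on each block, and summing the one-dimensional (or zero-dimensional) contributions. Your block-by-block bookkeeping, including the sign of $\rho(\zeta)$ on $e_{n/2}$ forcing $x_{n/2,1}=0$ for $\widetilde{S}$ and the condition $x_{n,1}\in\mathbb{R}\cdot e^{i\zeta/2}$ on $e_{n}$, matches the paper's computation.
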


\begin{proof}
For $n$ odd, the group $S$ is generated by $(\kappa,0)$ and $(0,\bar{\kappa})$
and $\widetilde{S}$ by $(\kappa,\pi)$ and $(0,\pi\bar{\kappa})$. For
$k\in\lbrack1,n/2)\cap\mathbb{N}$, the point $(z_{1},z_{2})\in e_{k}\oplus
e_{n-k}$ is fixed by $S$ if $z_{1}=z_{2}\in\mathbb{R}$ and by $\widetilde{S}$
if $z_{1}=-z_{2}\in\mathbb{R}$. Then, the fixed point space of the groups $S$
and $\widetilde{S}$ have dimension one in $e_{k}\oplus e_{n-k}$ . We conclude
that the total dimension of the fixed point space for the representations
$k\in\lbrack1,n/2)\cap\mathbb{N}$ is $(n-1)/2$. For $k=n$, the point
$x_{n,1}\in\mathbb{C}$ is fixed by $S$ if $x_{n,1}\in\mathbb{R}$, and by
$\widetilde{S}$ if $x_{n,1}=0$.

For $n$ even, the group $\widetilde{S}$ is generated by $(\kappa\zeta,0)$ and
$(0,\zeta\bar{\kappa})$. For $k\in\lbrack1,n/2)\cap\mathbb{N}$, we have the
actions
\begin{align*}
\rho(0,\zeta\bar{\kappa})(z_{1},z_{2})  &  =(e^{i\zeta}\bar{z}_{2},e^{i\zeta
}\bar{z}_{1})\text{,}\\
\rho(\kappa\zeta,0)(z_{1},z_{2})  &  =(e^{-ik\zeta}z_{2},e^{ik\zeta}z_{1}).
\end{align*}
Thus, the point $(z_{1},z_{2})\in e_{k}\oplus e_{n-k}$ is fixed by $S$ if
$z_{1}=z_{2}\in\mathbb{R}$, and by $\widetilde{S}$ if $z_{1}=e^{i\zeta}\bar
{z}_{2}$ and $z_{1}=e^{i(k+1)\zeta}z_{2}$. Both conditions define subspaces of
dimension equal to one. Therefore, the total dimension of the fixed point
space, for $k\in\lbrack1,n/2)\cap\mathbb{N}$, is $n/2-1$. For $k=n/2,n$, the
point $x_{k,1}\in\mathbb{C}$ is fixed by $S$ if $x_{k,1}\in\mathbb{R}$.
Moreover, the representation $k=n$ is fixed by $\widetilde{S}$ if
$x_{n,1}=re^{i\zeta}$.
\end{proof}

\textbf{Acknowledgements. }C. Garc\'{\i}a is grateful to G. James for the
useful discussions about this problem, and to J. Alc\'{a}ntara, M.
Ballesteros, M. Tejada-Wriedt and the referees for their comments about the manuscript.

\end{document}